\newtheorem{corollary}{Corollary}[section]
\newtheorem{lemma}[corollary]{Lemma}
\newtheorem{theorem}[corollary]{Theorem}
\newfont{\sBlackboard}{msbm10 scaled 900}
\newcommand{\dd}     {{\rm d}}
\newcommand{\mylabel}[1]{\label{#1}
            \ifx\undefined\stillediting
            \else \fbox{$#1$}\fi }
\newcommand{\BE}{\begin{equation}}
\newcommand{\EEQ}{\end{equation}}
\newcommand{\rfb}[1]{\mbox{\rm
   (\ref{#1})}\ifx\undefined\stillediting\else:\fbox{$#1$}\fi}
\newfont{\Blackboard}{msbm10 scaled 1200}
\newcommand{\bl}[1]{\mbox{\Blackboard #1}}
\newfont{\roma}{cmr10 scaled 1200}
\def\CC{\rm \hbox{C\kern-.56em\raise.4ex
         \hbox{$\scriptscriptstyle |$}\kern+0.5 em }}
\newcommand{\nline}  {{\bl N}}
\newcommand{\rline}  {{\mathbb R}}
\def\cD{{\cal D}}
\newcommand{\mm}    {{\hbox{\hskip 0.5pt}}}
\newcommand{\bluff} {{\hbox{\raise 15pt \hbox{\mm}}}}
\def\section{\@startsection {section}{1}{\z@}{-3.5ex plus -1ex minus
    -.2ex}{2.3ex plus .2ex}{\large\bf}}
\def\be{\begin{equation}}
\def\ee{\end{equation}}
\def\beqs{\begin{eqnarray*}}
\def\eeqs{\end{eqnarray*}}
\def\ds{\displaystyle}
\renewcommand{\div}     {{\rm div\,}}
\begin{document}
\thispagestyle{empty}
\title{\bf Polynomial stabilization of some dissipative hyperbolic systems}
\author{K. Ammari \thanks{D\'epartement de Math\'ematiques,
Facult\'e des Sciences de Monastir, Universit\'e de Monastir, 5019 Monastir, Tunisie,
e-mail~:kais.ammari@fsm.rnu.tn} \, E. Feireisl
\thanks{Institute of Mathematics of the Academy of Sciences of the Czech Republic, $\hat{\hbox{Z}}$itn\`a 25, 115 67 Praha 1, Czech Republic, e-mail~: feireisl@math.cas.cz. Supported by Grant 201/09/ 0917 of GA \v CR as a part of the general research programme of the Academy
of Sciences of the Czech Republic, Institutional Research Plan RVO: 67985840.}\, and \, S. Nicaise
\thanks{Universit\'e de Valenciennes et du Hainaut Cambr\'esis,
LAMAV, FR CNRS 2956, Le Mont Houy, 59313 Valenciennes Cedex 9,
France, \, email~: snicaise@univ-valenciennes.fr}
}
\date{}
\maketitle
{\bf Abstract.}{ {\small
We study the problem of stabilization for the acoustic system with a spatially distributed damping. Imposing various
hypotheses on the structural properties of the damping term, we identify either exponential or polynomial
decay of solutions with growing time. Exponential decay rate is shown by means of a time domain approach, reducing the problem
to an observability inequality to be verified for solutions of the associated
conservative problem. In addition, we show a polynomial stabilization result, where
the proof uses a frequency domain method and combines a contradiction argument
with the multiplier technique to carry out a special analysis for
the resolvent.
}}

\noindent
{\bf AMS subject classification (2010)}: 35L04, 93B07, 93B52, 74H55.\\
{\bf Keywords}: exponential stability, polynomial stability, observability inequality, resolvent estimate, dissipative hyberbolic system,
acoustic equation.
\section{Introduction} \label{intro}

{{We consider the following system of equations:}}

\be
\label{fluide}
\left\{
\begin{array}{l}
\vec{u}_t + \nabla r + \alpha \, \vec{u} = 0, \,\hbox{ in } \Omega \times \rline^+, \\
r_t + div \vec{u} = 0, \,\hbox{ in } \Omega \times \rline^+, \\
\vec{u} \cdot n = 0, \,\hbox{ on } \Gamma \times \rline^+, \\
\vec{u}(0,x) = \vec{u}^0(x), \, r(0,x) = r^0(x), \, x \in \Omega,
\end{array}
\right.
\ee
where $\Omega$ is a  bounded {{domain in}}  $\rline^d, \, d=2,3$,
with a smooth boundary $\Gamma$, $div = \nabla\cdot$ is the divergence operator and {{ $\alpha \in \mathcal{C}^\infty (\overline{\Omega})$}},
with $\alpha \ge 0$ on $\Omega$ and {{such that}}
\begin{equation}\label{eq:a}
\exists\ \alpha_->0 \mbox{ such that } \alpha \ge \alpha_-
\mbox{ on } \omega.
\end{equation}
Here $\omega \ne \emptyset$ stands for the {{subset}} of $\Omega$ on which the feedback is active.
As usual $n$ denotes the unit outward normal vector along $\Gamma$.

{{ The system of equations (\ref{fluide}) is a linearization of the \emph{acoustic equation} governing the propagation of 
acoustic waves in a compressible medium, see Lighthill \cite{Lighthill:78,Lighthill:52,Lighthill:54}, where $\alpha \vec{u}$ represents a damping term of Brinkman type. 
This kind of damping arises also in the process of homogenization (see Allaire \cite{Allaire:91}), and is frequently used as a suitable \emph{penalization} 
in fluid mechanics models, see Angot, Bruneau, and Fabrie \cite{AngotBruneauFabrie:99}. Our main goal is to find sufficient condition on the initial data and the function $\alpha$ so that the solution of (\ref{fluide}) stabilizes to zero for $t \to \infty$ and, if this occurs, we are interested in the rate of this decay.
}}


Let $L^2(\Omega)$ denote the standard Hilbert space of square integrable functions in $\Omega$.
To avoid abuse of notation,
we shall write $\|\cdot\|$ for the $L^2(\Omega)$-norm or the $L^2(\Omega)^d$-norm.
{{ Denoting  $H = (L^2(\Omega))^d \times L^2(\Omega)$, 
we introduce the operator
$$
{\cal A} = \left(
\begin{array}{ll}
0  & \nabla \\
div  & 0
\end{array}
\right) : {\cal D}({\cal A}) = \left\{(\vec{u},r) \in H, \, (\nabla r,div \vec{u}) \in H, \, \vec{u} . n_{|\Gamma} = 0 \right\} \subset  H \rightarrow H,
$$
and
$$
{\cal B} =  \left( \begin{array}{ll} \sqrt{\alpha} \\ 0 \end{array} \right) \in {\cal L}((L^2(\Omega))^d,H), \,
{\cal B}^* = \left( \begin{array}{cc} \sqrt{\alpha} & 0 \end{array} \right) \in {\cal L}(H, (L^2(\Omega))^d).
$$

}}

{{ 
Accordingly, the problem (\ref{fluide}) can be recast in an abstract form:
\be
\label{cauchy}
\left\{
\begin{array}{l}
\vec{Z}_t (t) + {\cal A} \vec{Z}(t) + {\cal B} {\cal B}^* \vec{Z}(t) = 0, \, t > 0, \\
\vec{Z}(0) = \vec{Z}^0,
\end{array}
\right.
\ee
where $\vec{Z} = ( \vec{u}, r )$,
or, equivalently, 
\be
\label{cauchybis}
\left\{
\begin{array}{l}
\vec{Z}_t (t) = {\cal A}_d \vec{Z}(t), \, t > 0, \\
\vec{Z}(0) = \vec{Z}^0,
\end{array}
\right.
\ee
with ${\cal A}_d = - {\cal A} - {\cal B}{\cal B}^*$ with ${\cal D}({\cal A}_d) = {\cal D}({\cal A}).$
}}

{{It can be shown (see Section \ref{section2} below) that for any initial data $(\vec{u}^0, r) \in {\cal D}({\cal A})$ the 
problem \eqref{fluide} admits a unique solution
$$(\vec{u},r) \in C([0,\infty); {\cal D}({\cal A})) \cap C^1([0, \infty); H).$$ Moreover, the solution $(\vec{u},r)$ satisfies,
the energy identity
\be
\label{energyid}
E(0) - E(t) =
 \int_0^t
\left\|\sqrt{\alpha} \, \vec{u}(s)\right\|_{(L^2(\Omega))^d}^2 \dd s \ \mbox{for all}\ t \geq 0
\ee
with
\be
\label{energy}
E(t) = \frac{1}{2} \,\left\|(\vec{u}(t),r(t)) \right\|^2_{H}, \, \forall \, t \geq 0, 
\ee
where we have denoted
$$
\left\langle (\vec{u},r), (\vec{v},p)\right\rangle_H = \int_\Omega \left( \vec{u}(x) . \vec{v}(x) + r(x) p(x) \right) \, dx , \,  \left\| (\vec{u},r)\right\|_H = \sqrt{\int_\Omega \left(\left| \vec{u}(x)\right|^2 + r^2(x) \right) \, dx}.
$$

}}

{{ Using (\ref{energy}) and the standard density arguments, we can extend the solution operator for the data 
$(\vec{u}^0, r) \in H$. Consequently, we associate to the problem (\ref{fluide}) (or to the abstract Cauchy problems (\ref{cauchy}), 
(\ref{cauchybis})) a solution (semi)-group that is globally bounded in $H$. 
}}

{{ As the energy $E$ is nonincreasing along trajectories, we want to determine the set of initial data $(\vec{u}^0, r^0)$ for which 
\begin{equation}\label{stab}
E(t) \to 0 \ \mbox{as} \ t \to \infty.
\end{equation}
Such a question is of course intimately related to the structural properties of the function $\alpha$, notably to the geometry of the 
set $\omega$ on which the damping is effective. 
}}

In this paper, {{we characterize the set of initial data for which (\ref{stab}) holds in terms of the set $\omega$, and, eventually 
we obtain some information on the rate of decay. In particular, we establish an observability inequality for the associated conservative system yielding \emph{exponential} decay and use 
a frequency domain method, combined 
with the multiplier technique, to obtain \emph{polynomial} rate of decay. It is worth-observing that the associated conservative system coincides 
with the standard linear \emph{wave} equation, supplemented with the Neumann boundary conditions, where the asymptotic behavior of solutions is relatively well understood.
}}

The paper is organized as follows. Section \ref{section2} summarizes some well known facts concerning the acoustic system \rfb{fluide}. In section \ref{section3}, we examine the spectral properties of the generator ${\cal A}_d$ and establish a strong stability results. Section \ref{section4} addresses the exponential and not exponential stability results.
In Section \ref{section5}, we prove exponential stability for a modified 
system, with a slightly different damping law, by using an observability strategy.
Polynomial stability of the modified system
is studied in Section \ref{section6}.

\section{Preliminaries}
\label{section2}

We start with a simple observation that the problem (\ref{fluide}) can be viewed as a \emph{bounded} (in $H$) perturbation of the conservative system 
\be
\label{fluidec}
\left\{
\begin{array}{l}
\vec{u}_t + \nabla r  = 0, \,\hbox{ in } \Omega \times \rline^+, \\
r_t + div \vec{u} = 0, \,\hbox{ in } \Omega \times \rline^+, 
\end{array}
\right\}
\ee
which can be recast as the standard \emph{wave equation}
\[
r_{t,t} - \Delta r = 0.
\]
Consequently, the basic existence theory for (\ref{fluide}) derives from that of (\ref{fluidec}).

Moreover, since the boundary $\Gamma$ as well as the damping coefficient $\alpha$ are smooth, solutions of (\ref{fluide})
remain smooth as soon as we take $\vec{u}^0$, $r^0$ smooth and satisfying relevant compatibility conditions as the case may be. In what follows, we may therefore deal with smooth solution, whereas the results for  data in $H$ can be obtained by means of density arguments.

\subsection{Long-time behavior} 

The operator ${\cal A}_d$ possesses a non-trivial kernel that is left invariant by the evolution, namely, solutions of the ``stationary'' problem 
\begin{equation}\label{st1}
\nabla r + \alpha \vec{u} = 0, \ \nabla \cdot \vec{u} = 0, \ \vec{u} \cdot n|_{\Gamma} = 0.
\end{equation} 
Thus the stationary field $\vec u$ is solenoidal, and, integrating (\ref{st1}) over $\Omega$ yields 
\[
\vec u = 0 \ \mbox{in}\ {\rm supp}\ \alpha , \ \nabla r = 0.
\]
Accordingly, we introduce the space 
\[
E = {\rm Ker}[{\cal A}_d] = \{ (\vec u,r) \ | \ \nabla \cdot \vec u = 0, \ \vec u|_{{\rm supp} \alpha} = 0, 
\ \vec u \cdot \vec{n}|_{\Gamma} = 0, \ r = {\rm const} \},
\]
together with its orthogonal complement (in $H$) denoted $H_0$. 

It is easy to check that 
\[
\left< {\cal A}_d ( \vec w, s ), (\vec u, r) \right>_H = 0 \ \mbox{for any}\ 
(\vec w, s) \in {\cal D}({\cal A}), \ (\vec u,r) \in E;
\] 
in particular, the solution operator associated to (\ref{fluide}) leaves both $E$ and $H_0$ invariant. 
Consequently, the decay property (\ref{stab}) may hold only for the initial data emenating from the set $H_0$.

\section{Strong stability}\label{section3}

The following observation can be shown by a simple density argument:

\begin{lemma}
The solution $(\vec{u},r)$ of \rfb{fluide} with
initial datum in ${D}(\mathcal{A}_d)$ satisfies
\be
\label{deriveeenergy}
E'(t) = - \int_\Omega \alpha\left|\vec{u}\right|^2dx\leq0.
\ee
Therefore the energy is non-increasing and \rfb{energyid} holds for all initial datum in $H$.
\end{lemma}

As already shown in the previous section, the strong stability result (\ref{stab}) may hold only if 
we take the initial data
\[
(\vec u^0, r^0) \in H_0 = {\rm Ker}[{\cal A}_d]^\perp. 
\]
There are several ways how to show (\ref{stab}), here
we make use of the following result
due to Arendt and Batty \cite{arendt:88}:

\begin{theorem}\label{thmArendtBatty}
Let $(T(t))_{t\geq0}$ be a bounded $C_0$-semigroup on a reflexive space $X$. Denote by $A$ the generator of $(T(t))$ and by $\sigma(A)$ the spectrum of $A$. If $\sigma(A)\cap i\mathbb{R}$ is countable and no eigenvalue of $A$ lies on the imaginary axis, then $\ds \lim_{t\rightarrow+\infty} T(t)x = 0$ for
all $x\in X$.
\end{theorem}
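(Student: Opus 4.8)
Since this statement is quoted verbatim from Arendt and Batty \cite{arendt:88}, in the paper itself it suffices to cite the reference; nevertheless, let me record the strategy I would follow to prove it. The plan is to exploit reflexivity to obtain a structural decomposition of $X$ under the semigroup, and then to use countability of the boundary spectrum to upgrade a weak decay property to strong convergence. First I would observe that, because $X$ is reflexive and $(T(t))$ is bounded, every orbit $\{T(t)x : t \ge 0\}$ is bounded and hence \emph{relatively weakly compact}. This is exactly the hypothesis needed for the Jacobs--de Leeuw--Glicksberg splitting, which yields a direct sum decomposition
\[
X = X_r \oplus X_f ,
\]
where $X_r$ is the \emph{reversible} part, equal to the closed linear span of the eigenvectors of $A$ associated with eigenvalues lying on $i\mathbb{R}$, and $X_f$ is the space of \emph{flight} vectors, i.e.\ those $x$ for which $0$ belongs to the weak closure of the orbit. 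Both subspaces are closed and $T(t)$-invariant.

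The hypothesis that no eigenvalue of $A$ lies on the imaginary axis would then be used immediately: it forces $X_r = \{0\}$, so that $X = X_f$, i.e.\ \emph{every} vector is a flight vector. At this stage the statement has been reduced to a single assertion, namely that a flight vector in fact decays to $0$ in norm. The remaining task is therefore to show that, for $x \in X_f$, one has $\lim_{t\to\infty}\|T(t)x\|=0$, and this is where the countability assumption on $\sigma(A) \cap i\mathbb{R}$ must do its work.

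To carry out this last step I would pass to the Laplace transform of the orbit $u(t) = T(t)x$: for $\mathrm{Re}\,\lambda > 0$ it equals the resolvent $R(\lambda,A)x = (\lambda - A)^{-1}x$, which extends holomorphically to all of $\mathbb{C}\setminus\sigma(A)$ and in particular across $i\mathbb{R}$ away from the countable closed set $\Sigma := \sigma(A)\cap i\mathbb{R}$. A nonempty countable closed subset of $i\mathbb{R}$ is scattered (by the Baire category theorem it cannot be perfect), so $\Sigma$ has isolated points, and one analyses the contribution of each such point, removing it and iterating along the Cantor--Bendixson derivatives of $\Sigma$ to handle the full countable set by transfinite induction. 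The analytic heart, and the main obstacle, is the Tauberian input of Arendt--Batty--Ingham type: that a bounded $X$-valued function whose Laplace transform extends holomorphically across $i\mathbb{R}$ except on a scattered set carrying no nondecaying (almost periodic) mass must tend to $0$; the flight property together with the absence of imaginary eigenvalues is precisely what guarantees the singularities at points of $\Sigma$ produce no persistent oscillatory component. All the preceding reductions — relative weak compactness, the reversible/flight splitting, and the vanishing of $X_r$ — are routine, so the entire difficulty is concentrated in this Tauberian/spectral core.
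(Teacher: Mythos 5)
The paper gives no proof of this theorem at all: it is stated as a known result attributed to Arendt and Batty \cite{arendt:88}, so your opening point --- that a citation suffices --- is exactly the paper's treatment. Your supplementary sketch is consistent with the classical proofs: reflexivity plus boundedness does give relatively weakly compact orbits, the Jacobs--de Leeuw--Glicksberg splitting $X = X_r \oplus X_f$ applies, and the absence of imaginary eigenvalues does force $X_r = \{0\}$. Two caveats, however. First, the ``Tauberian/spectral core'' you invoke at the end is not an auxiliary lemma but essentially the Arendt--Batty theorem itself (its proof is the transfinite induction over the Cantor--Bendixson derivatives of $\Sigma$ combined with a quantified contour-integral estimate), so what you have written is an accurate map of where the difficulty lies rather than a proof of it. Second, the mechanism that neutralizes each isolated point of $\Sigma$ in that induction is not the flight property as such but the vanishing of the point spectrum of the adjoint, $\sigma_p(A^*)\cap i\mathbb{R} = \emptyset$, which is the hypothesis of the general (non-reflexive) theorem; in the reflexive setting this follows from $\sigma_p(A)\cap i\mathbb{R} = \emptyset$ via the mean ergodic theorem (the ergodic projection onto $\ker(A - i\eta)$ exists by weak compactness and vanishes, forcing $\overline{{\rm ran}(A - i\eta)} = X$). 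This bridge is the second, easy-to-overlook, place where reflexivity enters, and your sketch leaves it implicit. An alternative route that avoids contour integrals entirely is the Lyubich--V\~u argument: quotient by the subspace of norm-decaying vectors, pass to the induced isometric limit group, and use countability of its spectrum plus Gelfand's theorem to manufacture an eigenvalue of $A^*$, contradicting the above.
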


In view of this theorem we need to identify the spectrum of
${\mathcal A}_d$ lying on the imaginary axis.

First we look at the point spectrum.

\begin{lemma}\label{pointspectrum}
Suppose that $|\omega| > 0$.
If $\lambda$ is a non-zero real number, then $i\lambda$  is not an eigenvalue of ${\mathcal A}_d$.
\end{lemma}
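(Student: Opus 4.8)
The plan is to argue by contradiction. Suppose $\lambda\in\rline\setminus\{0\}$ and that $i\lambda$ is an eigenvalue of ${\cal A}_d$, with eigenvector $\vec Z=(\vec u,r)\in{\cal D}({\cal A})$, $\vec Z\neq0$. Recalling that ${\cal A}_d=-{\cal A}-{\cal B}{\cal B}^*$, with ${\cal B}{\cal B}^*(\vec u,r)=(\alpha\vec u,0)$, the eigenvalue relation ${\cal A}_d\vec Z=i\lambda\vec Z$ reads, componentwise,
\[
\nabla r+(\alpha+i\lambda)\vec u=0,\qquad \div\vec u+i\lambda r=0 \quad\hbox{in }\Omega.
\]

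First I would extract the damping information by testing with $\vec Z$. Since ${\cal A}$ is skew-adjoint on ${\cal D}({\cal A})$ (integration by parts using the boundary condition $\vec u\cdot n|_\Gamma=0$, exactly as in the energy identity), one has $\mathrm{Re}\,\langle-{\cal A}\vec Z,\vec Z\rangle_H=0$, and therefore
\[
-\int_\Omega\alpha|\vec u|^2\,dx=\mathrm{Re}\,\langle{\cal A}_d\vec Z,\vec Z\rangle_H=\mathrm{Re}\,(i\lambda\|\vec Z\|_H^2)=0.
\]
As $\alpha\ge0$ and $\alpha\ge\alpha_-$ on $\omega$, this forces $\vec u=0$ a.e.\ on the open set $\{\alpha>0\}\supseteq\omega$. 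Feeding $\vec u\equiv0$ on $\omega$ back into the two equations gives $\nabla r=0$ and, crucially because $\lambda\neq0$, also $i\lambda r=-\div\vec u=0$, hence $r=0$ on $\omega$. Thus the whole eigenvector vanishes on $\omega$; this is precisely the step where the hypothesis $\lambda\neq0$ enters.

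The remaining task is to propagate this vanishing from $\omega$ to all of $\Omega$ by unique continuation. Since $\alpha+i\lambda$ never vanishes (its imaginary part is $\lambda\neq0$), I can solve the first equation for $\vec u=-(\alpha+i\lambda)^{-1}\nabla r$ and substitute it into the second, obtaining a scalar second-order equation for $r$,
\[
\Delta r-\frac{\nabla\alpha}{\alpha+i\lambda}\cdot\nabla r-(i\lambda\alpha-\lambda^2)\,r=0 \quad\hbox{in }\Omega.
\]
The key structural point is that its principal part is the plain Laplacian, while the lower-order coefficients are smooth and bounded (because $|\alpha+i\lambda|\ge|\lambda|>0$ and $\alpha\in\mathcal C^\infty(\overline\Omega)$). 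After an elliptic-regularity bootstrap ensuring $r$ is regular enough, the weak unique continuation property for differential inequalities of the form $|\Delta r|\le C(|\nabla r|+|r|)$ applies: since $r$ vanishes on the nonempty open set $\omega$ and $\Omega$ is connected, it follows that $r\equiv0$ in $\Omega$, whence $\vec u=-(\alpha+i\lambda)^{-1}\nabla r\equiv0$, contradicting $\vec Z\neq0$.

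I expect the main obstacle to be this unique continuation step, together with the preliminary verification that the eigenfunction is smooth enough for a Carleman- or Aronszajn-type argument to apply (obtained here from the smoothness of $\alpha$ and interior elliptic regularity for the reduced equation). The algebraic reduction and the dissipation identity are routine; the only genuinely substantive ingredient is the unique continuation principle for a Schrödinger-type operator with the Laplacian as principal part.
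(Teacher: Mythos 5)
Your proof is correct and follows essentially the same route as the paper: the dissipation identity forces $\vec u=0$ on the open set $\{\alpha>0\}\supseteq\omega$, the hypothesis $\lambda\neq 0$ then gives $r=0$ there, and unique continuation for a second-order elliptic equation with the Laplacian as principal part yields $r\equiv 0$, hence $\vec u\equiv 0$, a contradiction. The only (minor) difference is in the last step: the paper notes that since $\vec u$ vanishes wherever $\alpha>0$, the term $\alpha\vec u$ vanishes identically in $\Omega$, so the eigenfunction solves the \emph{undamped} system and the reduced equation is the constant-coefficient Helmholtz equation $-\Delta r=\lambda^2 r$, for which unique continuation is classical, whereas you keep the $\alpha$-dependent lower-order terms and invoke the slightly heavier Aronszajn-type unique continuation for differential inequalities $|\Delta r|\le C(|\nabla r|+|r|)$ --- both are valid.
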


\begin{proof}

Suppose that
\be\label{eigenir}
\left\{
\begin{array}{c}
i\lambda \,
\vec{u} + \nabla r + \alpha \, \vec{u}=\vec{0}, \\
i\lambda \, r + div \, \vec{u} =0.
\end{array}
\right.
\ee

From (\ref{eigenir}) we deduce that 
\[
i \lambda \int_{\Omega} (|\vec u|^2 - |r|^2 ) {\rm d}x + \int_{\Omega} \alpha |\vec u|^2 \ {\rm d}x = 0;
\]
whence $\vec u|_{{\rm supp} \alpha} = 0$, and, consequently, $\vec u$, $r$ solve (\ref{eigenir}) with $\alpha = 0$. 
In particular, we get 
\[
- \Delta r = \lambda^2 r , \ \nabla r \cdot n|_{\Gamma} = 0, \ r|_{{\rm supp} \alpha} = 0,
\]
and, by unique continuation for elliptic problems, we get $r = 0$.
\end{proof}

In accordance with Lemma \ref{pointspectrum} and the discussion in the previous section, 
$\lambda = 0$ is the only possibly eigenvalue of ${\cal A}_d$ on the imaginary axis. 

Next, we show that ${\mathcal A}_d$ has no continuous spectrum on the imaginary axis, except eventually zero.
\begin{lemma}\label{resolventiR}
Suppose that $|\omega| > 0$.
If $\lambda$ is a non-zero real number, then $i\lambda$ belongs to the resolvent set $\rho({\mathcal A}_d)$ of ${\mathcal A}_d$.
\end{lemma}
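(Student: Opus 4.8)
The plan is to show that $i\lambda I - {\cal A}_d$ is a bijection from ${\cal D}({\cal A})$ onto $H$; since ${\cal A}_d$ is closed (it generates the solution semigroup), the boundedness of the inverse then follows from the closed graph theorem, which is precisely the assertion $i\lambda \in \rho({\cal A}_d)$. Injectivity is already contained in Lemma~\ref{pointspectrum}, because a nontrivial element of $\ker(i\lambda I - {\cal A}_d)$ is exactly an eigenvector of ${\cal A}_d$ for the eigenvalue $i\lambda$, i.e. a nontrivial solution of \eqref{eigenir}, which was excluded there. Hence the whole difficulty lies in surjectivity. Note that one cannot shortcut this by compactness of the resolvent: the graph norm of ${\cal A}$ controls $r$ in $H^1(\Omega)$ but controls $\vec u$ only in $H(\div,\Omega)$, which is \emph{not} compactly embedded in $(L^2(\Omega))^d$, so $i\lambda$ must genuinely be removed from the continuous spectrum.

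For surjectivity I would fix $F=(\vec f,g)\in H$ and solve $(i\lambda I - {\cal A}_d)(\vec u,r)=F$, namely
\be
\label{resolv}
i\lambda\,\vec u + \nabla r + \alpha\,\vec u = \vec f,\qquad i\lambda\, r + \div\,\vec u = g.
\ee
Since $\lambda\neq0$ and $\alpha\geq0$, the factor $i\lambda+\alpha$ never vanishes, so the first equation can be solved algebraically, $\vec u=(i\lambda+\alpha)^{-1}(\vec f-\nabla r)$. Inserting this into the second equation collapses the system into a single scalar elliptic equation for $r$,
\be
\label{scalar}
i\lambda\, r - \div\!\left(\frac{1}{i\lambda+\alpha}\,\nabla r\right) = g - \div\!\left(\frac{\vec f}{i\lambda+\alpha}\right),
\ee
together with the Neumann-type condition $\partial_n r = \vec f\cdot n$ on $\Gamma$ inherited from $\vec u\cdot n=0$. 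The reduction is reversible: once $r\in H^1(\Omega)$ solves \eqref{scalar} weakly, the recovered $\vec u$ lies in $(L^2(\Omega))^d$, satisfies $\div\,\vec u=g-i\lambda r\in L^2(\Omega)$ and $\vec u\cdot n=0$, hence $(\vec u,r)\in{\cal D}({\cal A})$.

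It remains to solve \eqref{scalar}, and this is where the main obstacle appears: the sesquilinear form attached to the principal part, $\int_\Omega (i\lambda+\alpha)^{-1}\,\nabla r\cdot\nabla\bar\phi$, is not coercive because $\mathrm{Re}\,(i\lambda+\alpha)^{-1}=\alpha/(\alpha^2+\lambda^2)$ degenerates to $0$ wherever $\alpha$ vanishes. I would circumvent this by a rotation: for every $\alpha\geq0$ the coefficient $(i\lambda+\alpha)^{-1}$ stays in a fixed closed sector of opening $\pi/2$ (arguments between $-\tfrac{\pi}{2}$ and $0$ when $\lambda>0$), with modulus bounded below by $(\|\alpha\|_\infty^2+\lambda^2)^{-1/2}$; thus, after multiplying \eqref{scalar} by an appropriate unimodular factor $e^{i\theta}$ (with $\theta=\mathrm{sgn}(\lambda)\,\pi/4$), the principal part becomes a \emph{uniformly elliptic} form on $H^1(\Omega)$. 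The surviving zeroth-order term $e^{i\theta}i\lambda\int_\Omega r\bar\phi$ defines, via the compact Rellich embedding $H^1(\Omega)\hookrightarrow L^2(\Omega)$, a compact perturbation. Consequently the operator associated with \eqref{scalar} is Fredholm of index zero, so surjectivity is equivalent to injectivity; the homogeneous version of \eqref{scalar} corresponds exactly to \eqref{eigenir}, which has only the trivial solution by Lemma~\ref{pointspectrum}. Hence the kernel is trivial, \eqref{scalar} is solvable for every right-hand side, $i\lambda I-{\cal A}_d$ is surjective, and the proof is complete.
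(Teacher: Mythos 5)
Your proof is correct, and its skeleton is the same as the paper's: solve the first resolvent equation algebraically for $\vec u$ (possible because $i\lambda+\alpha$ never vanishes), insert the result into the second equation to get a single scalar elliptic problem for $r$ with a natural Neumann-type boundary condition built into the weak formulation, and settle solvability by a Fredholm argument in which the zeroth-order term $i\lambda\int_\Omega r\bar s\,dx$ is the compact perturbation. The differences are in the functional setting, and they are instructive. The paper poses the weak problem on the mean-free space $H^1_m(\Omega)$ and asserts that the principal part $\int_\Omega(i\lambda+\alpha)^{-1}\nabla r\cdot\nabla\bar s\,dx$ is coercive there; since its real part degenerates where $\alpha=0$ (exactly the obstacle you identify), the coercivity actually being used is the complex Lax--Milgram one, $|a_p(r,r)|\geq|\mathrm{Im}\,a_p(r,r)|\geq c\,\|\nabla r\|^2$, which is precisely the sector/rotation observation you make explicit with the factor $e^{i\,\mathrm{sgn}(\lambda)\pi/4}$ --- so your rotation and the paper's coercivity claim are two formulations of the same fact. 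Your choice to work on all of $H^1(\Omega)$ (G\aa rding inequality plus compact perturbation gives Fredholm of index zero) rather than on $H^1_m(\Omega)$ buys something concrete: the paper, having produced $r\in H^1_m(\Omega)$ from tests against mean-free functions, must extend the variational identity to all of $H^1(\Omega)$, and its justification ``$F(1)=0$'' is not true for general data, since $F(1)=\int_\Omega p\,dx$ while integrating the second resolvent equation shows the true solution has mean $\frac{1}{i\lambda}\int_\Omega p\,dx$; this is a repairable slip (shift $r$ by a suitable constant), but your formulation never encounters it. Finally, for injectivity the paper re-runs the uniqueness argument from scratch (test with $r$, take real parts to get $\nabla r=0$ on $\omega$, then Holmgren), whereas you transfer it to Lemma \ref{pointspectrum} through the equivalence between the kernel of the scalar problem and $\ker(i\lambda I-{\cal A}_d)$; the paper states this equivalence as well, so both routes are legitimate, yours merely avoiding the duplication.
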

\begin{proof}
In view of Lemma \ref{pointspectrum} 
it is enough to show that
$i\lambda I-{\mathcal  A}_d$ is surjective.

Hence given  a vector $\left(\vec{f}, p\right)\in H$,
we look for $\left(\vec{u},r\right)\in{\mathcal D}({\mathcal
A}_d)$ such that
\be
\label{CE0i}(i\lambda I-{\mathcal  A}_d
)\left(\vec{u},r\right)=\left(\vec{f}, p\right). \ee
By the definition of ${\mathcal  A}_d$, we obtain
$$
\left\{
\begin{array}{c}
i\lambda \,
\vec{u} + \nabla r + \alpha \, \vec{u}=\vec{f}, \\
i\lambda \, r + div \, \vec{u} =p.
\end{array}
\right.
$$

Assuming that $\vec{u}$ and $r$ exist we can write
\be\label{defvecui}
\vec{u} =\frac{1}{i\lambda+\alpha}(-\nabla r+\vec{f}).
\ee
Inserting this expression in the second identity we obtain the differential equation in $r$:
\be\label{DEinri}
i\lambda \, r - \div \, \left(\frac{1}{i\lambda+\alpha}\nabla r\right) =p-\div \, \left(\frac{1}{i\lambda+\alpha} \vec{f}\right) \hbox{ in } \Omega, \ \nabla r \cdot n|_{\Gamma} =0.
\ee

Multiplying this identity by a test function $s\in H^1(\Omega)$, integrating in $\Omega$
and
using formal integration by parts we get the problem:
\be\label{DEinrweaki}
\int_{\Omega} \left(
i\lambda r\bar{s} + \frac{1}{i\lambda+\alpha}\nabla r\cdot \nabla \bar{s}\right)dx =F(s), \forall s\in H^1(\Omega),
\ee
where
$$
F(s)=\int_{\Omega} \left(p\bar{s}+\frac{1}{i\lambda+\alpha} \vec{f} \cdot \nabla \bar{s}\right)\, dx.
$$

We use the Fredholm alternative by splitting the left-hand side of \rfb{DEinrweaki} into
its principal part
$$
a_p(r,s)=\int_{\Omega}   \frac{1}{i\lambda+\alpha}\nabla r\cdot \nabla \bar{s} dx,
$$
and its lower order term
$$
a_0(r,s)= i\lambda \int_{\Omega}   r\bar{s} dx.
$$
The principal part is a continuous  sesquilinear coercive form on
$H^1_m(\Omega)$, 
\[
H^1_m(\Omega) = \{ v \in H^1(\Omega), \ \int_{\Omega} v \ {\rm d}x = 0 \};
\]
hence it induces an isomorphism $A_p$ from $H^1_m(\Omega)$ into $(H^1_m(\Omega))'$.
The mapping $A_0$ from $H^1_m(\Omega)$ into $(H^1_m(\Omega))'$ induces by $a_0$
is clearly given by $i\lambda I$ and is therefore compact.
Consequently by the Fredholm alternative,
$A_p+A_0$ is an isomorphism from $H^1_m(\Omega)$ into $(H^1_m(\Omega))'$ if and only if
it is injective.
But the injectivity of $A_p+A_0$ is equivalent to the injectivity of $i\lambda I-{\mathcal  A}_d$.
Indeed let $r\in H^1_m(\Omega)$ be such that $(A_p+A_0)r=0$ or equivalently
such that
\be\label{weakker}
\int_{\Omega} \left(
i\lambda r\bar{s} + \frac{1}{i\lambda+\alpha}\nabla r\cdot \nabla \bar{s}\right)dx =0, \forall s\in H^1_m(\Omega).
\ee
Since $r$ is of mean zero, this identity remains valid for all  $s\in H^1(\Omega)$. Hence by taking
$s\in \cD(\Omega)$ we find that
\be\label{DEker}
i\lambda \, r - \div \, \left(\frac{1}{i\lambda+\alpha}\nabla r\right) =0 \hbox{ in } \Omega.
\ee

By taking $s=r$ in \rfb{weakker}, we find
$$
\int_{\Omega} \left(
i\lambda |r|^2 + \frac{1}{i\lambda+\alpha}|\nabla r|^2\right)dx =0.
$$
By taking the real part of this identity, we find that
$$
\alpha |\nabla r|^2=0 \hbox{ in } \Omega.
$$
This implies that
$$
\nabla r=0 \hbox{ in } \omega.
$$
Hence by \rfb{DEker}, $r=0$ in $\omega$ and by the Holmgren uniqueness theorem we deduce that $r=0$.

In conclusion $A_p+A_0$ is an isomorphism from $H^1_m(\Omega)$ into $(H^1_m(\Omega))'$
which implies that there exists a unique solution $r\in H^1_m(\Omega)$ of
$$
\int_{\Omega} \left(
i\lambda r\bar{s} + \frac{1}{i\lambda+\alpha}\nabla r\cdot \nabla \bar{s}\right)dx =F(s), \forall s\in H^1_m(\Omega),
$$
As $F(1)=0$, we deduce that $r\in H^1_m(\Omega)$ is solution of  \rfb{DEinrweaki}
and relation \rfb{CE0i}
holds.
\end{proof}

These Lemmas and Theorem \ref{thmArendtBatty} leads to
\begin{corollary}
\label{cconv}
Let $(\vec u, r)$ be the unique semi-group solution of the problem (\ref{fluide}) emanating from the initial data 
$(\vec u^0,r^0) \in H$. Let $P_E$ be the orthogonal projection onto the space $E = {\rm Ker}[{\cal A}_d ]$ in $H$, and let 
\[
(\vec w,s) = P_E (\vec u^0, r^0).
\] 

Then
\[ 
\| (\vec u,r)(t, \cdot) - (\vec w, s) \|_{H} \to 0 \ \mbox{as}\ t \to \infty.
\]
\end{corollary}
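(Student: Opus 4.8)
The plan is to remove the kernel obstruction by splitting the phase space and then to invoke the Arendt--Batty criterion (Theorem \ref{thmArendtBatty}) on the orthogonal complement $H_0 = E^\perp$. Writing $T(t)$ for the solution semigroup, I would first record, as already observed in Section \ref{section2}, that the orthogonal decomposition $H = E \oplus H_0$ \emph{reduces} the generator $\mathcal{A}_d$: the identity $\langle \mathcal{A}_d(\vec w, s),(\vec u,r)\rangle_H = 0$ for $(\vec w,s)\in\mathcal{D}(\mathcal{A})$ and $(\vec u,r)\in E$ shows that $\mathcal{A}_d$ maps $\mathcal{D}(\mathcal{A})$ into $E^\perp = H_0$, so that both $E$ and $H_0$ are invariant. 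Consequently $T(t)$ restricts to a $C_0$-semigroup $T_0(t)$ on $H_0$, whose generator is the part $A_0$ of $\mathcal{A}_d$ in $H_0$, and $T_0(t)$ is bounded because $T(t)$ is; moreover $H_0$, being a closed subspace of the Hilbert space $H$, is itself reflexive.

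Next I would decompose the initial datum as $(\vec u^0,r^0) = (\vec w,s) + (\vec u^0-\vec w,\, r^0 - s)$ with $(\vec w,s)=P_E(\vec u^0,r^0)\in E$ and the remainder in $H_0$. Since $(\vec w,s)\in E = {\rm Ker}[\mathcal{A}_d]\subset \mathcal{D}(\mathcal{A})$, the constant function $(\vec w,s)$ solves the Cauchy problem, so by uniqueness $T(t)(\vec w,s) = (\vec w,s)$ for all $t$. Hence, by linearity,
\[
\|(\vec u,r)(t,\cdot) - (\vec w,s)\|_H = \|T_0(t)(\vec u^0-\vec w,\, r^0-s)\|_H,
\]
and it remains only to prove that the right-hand side tends to $0$.

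For this I would verify the two hypotheses of Theorem \ref{thmArendtBatty} for $T_0(t)$ on $H_0$. Because the decomposition reduces $\mathcal{A}_d$, every nonzero purely imaginary number lies in $\rho(A_0)$ as soon as it lies in $\rho(\mathcal{A}_d)$, which is guaranteed by Lemma \ref{resolventiR}; therefore $\sigma(A_0)\cap i\rline \subseteq\{0\}$ is countable. Moreover $A_0$ has no eigenvalue on the imaginary axis: by Lemma \ref{pointspectrum} the only candidate is $0$, and ${\rm Ker}[A_0] = {\rm Ker}[\mathcal{A}_d]\cap H_0 = E\cap E^\perp = \{0\}$. Thus both hypotheses hold, $T_0(t)x\to 0$ for every $x\in H_0$, and the claim follows.

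The only point requiring genuine care is the transfer of spectral information from $\mathcal{A}_d$ on all of $H$ to the part $A_0$ on $H_0$: one must check that $E$ and $H_0$ truly reduce $\mathcal{A}_d$ (so that resolvents and spectra pass to $A_0$) and, crucially, that the eigenvalue $0$ — which obstructs a direct application of Arendt--Batty on the whole of $H$, since $E={\rm Ker}[\mathcal{A}_d]\neq\{0\}$ — is precisely what the restriction to $E^\perp$ eliminates. Everything else is a routine combination of the two preceding lemmas with Theorem \ref{thmArendtBatty}.
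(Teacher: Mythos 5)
Your proposal is correct and takes essentially the same route as the paper: the paper obtains Corollary \ref{cconv} directly by combining Lemmas \ref{pointspectrum} and \ref{resolventiR} with Theorem \ref{thmArendtBatty}, using the invariance of $E$ and $H_0 = E^\perp$ already noted in Section \ref{section2}. Your write-up merely makes explicit the reduction of $\mathcal{A}_d$ by the splitting $H = E \oplus H_0$ and the transfer of the spectral hypotheses to the part $A_0$ on $H_0$, details the paper leaves implicit.
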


\section{Exponential stability} \label{section4}
\setcounter{equation}{0}

Now, we may ask,
under some conditions on the damping coefficient, the convergence in Corollary \ref{cconv} is 
exponential in time.
\begin{theorem} \label{expstab}
If the damping coefficient $\alpha$ is not uniformly positive definite, meaning 
\[
\inf_{x \in \Omega} \alpha (x) = 0,
\]
then the system \rfb{fluide}
is not exponentially stable. Conversely, if $\alpha$ is uniformly positive definite, meaning 
\[
\omega = \Omega,
\]
then the system \rfb{fluide}
is exponentially stable, specifically, 
\[
\| (\vec u, r)(t, \cdot) \|_{H} \leq \exp(-Lt) \| (\vec u^0, r^0) \|_H ,\ L > 0, \ \mbox{whenever} \int_{\Omega} r^0 \ {\rm d}x = 0.
\]
\end{theorem}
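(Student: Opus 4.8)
The plan is to establish the two implications by independent arguments that both rest on the explicit algebraic structure of $\mathcal{A}_d$, the point being that here the dichotomy is governed solely by whether $\alpha$ is bounded below. The mechanism behind the negative result is that divergence-free velocity fields are \emph{stationary} for the conservative system \rfb{fluidec} (if $\operatorname{div}\vec u=0$ and $r=0$, then $\vec u_t=-\nabla r=0$ and $r_t=-\operatorname{div}\vec u=0$), so such a field can only be dissipated through the term $\int_\Omega\alpha|\vec u|^2$; concentrating it where $\alpha$ is small produces an almost undamped mode. The positive result, by contrast, is a Lyapunov/multiplier estimate transferring the dissipation of $\|\vec u\|$ onto $\|r\|$.

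For the lack of exponential stability, assume $\inf_\Omega\alpha=0$. As $\alpha$ is continuous and (in the nontrivial case) not identically zero, there is a boundary point $x_0$ of $\{\alpha>0\}$ with $\alpha(x_0)=0$ and $x_0\in\operatorname{supp}\alpha$. For each $n$ choose $\delta_n\to0$ with $\alpha\le 1/n$ on $B(x_0,\delta_n)\cap\Omega$, and take a divergence-free field $\vec v_n$ compactly supported in this ball with $\|\vec v_n\|=1$ (for instance $\vec v_n=\nabla^\perp\chi_n$ in dimension two, or $\vec v_n=\operatorname{curl}\vec\Phi_n$ in dimension three, for a rescaled bump). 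Then $(\vec v_n,0)\in\mathcal{D}(\mathcal{A}_d)$ and, since $\operatorname{div}\vec v_n=0$, one has $\mathcal{A}_d(\vec v_n,0)=(-\alpha\vec v_n,0)$, so $\|\mathcal{A}_d(\vec v_n,0)\|\le 1/n$. Since the energy is nonincreasing, $e^{t\mathcal{A}_d}$ is a contraction semigroup, whence $\|e^{t\mathcal{A}_d}(\vec v_n,0)-(\vec v_n,0)\|=\|\int_0^t e^{s\mathcal{A}_d}\mathcal{A}_d(\vec v_n,0)\,ds\|\le t/n$. Writing $\psi_n=P_{H_0}(\vec v_n,0)$ and using that $\mathcal{A}_d$ fixes the (invariant) kernel part, the same bound holds for $\psi_n$, so $\|e^{t\mathcal{A}_d}\psi_n\|\ge\|\psi_n\|-t/n$. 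If the semigroup were exponentially stable on $H_0$ with rate $\gamma$, evaluating at $t\sim n$ would force $\|\psi_n\|\to0$, contradicting a uniform lower bound on $\|\psi_n\|$; the same conclusion follows from the Gearhart--Pr\"uss resolvent characterization.

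For the exponential decay when $\alpha\ge\alpha_->0$ on all of $\Omega$, first note that $\int_\Omega r(t)\,dx=\int_\Omega r^0\,dx=0$ is preserved, so I may solve the Neumann problem $\Delta\psi=r$, $\nabla\psi\cdot n|_\Gamma=0$, and set $I(t)=\int_\Omega\vec u\cdot\nabla\psi\,dx$. Using the equations and integrating by parts, a direct computation gives $I'(t)=\|r\|^2-\int_\Omega\alpha\,\vec u\cdot\nabla\psi\,dx-\|\nabla\psi_t\|^2$; since $\|\nabla\psi\|\le C\|r\|$ and $\|\nabla\psi_t\|\le\|\vec u\|$, a Young inequality yields $I'(t)\ge\frac34\|r\|^2-C\|\vec u\|^2$, while $|I(t)|\le CE(t)$. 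Then $\mathcal{L}=E-\eta I$ is equivalent to $E$ for small $\eta>0$ and, together with $E'=-\int_\Omega\alpha|\vec u|^2\le-\alpha_-\|\vec u\|^2$, satisfies $\mathcal{L}'\le-(\alpha_--\eta C)\|\vec u\|^2-\frac{3\eta}{4}\|r\|^2\le-c\mathcal{L}$, which gives the stated exponential estimate.

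I expect the only real difficulty to be in the negative direction, namely verifying that the quasimodes $(\vec v_n,0)$ stay transverse to $\mathrm{Ker}[\mathcal{A}_d]$, i.e. that $\|\psi_n\|$ is bounded below. Since kernel elements vanish on $\operatorname{supp}\alpha$ whereas $\vec v_n$ concentrates at $x_0\in\operatorname{supp}\alpha$, one has $\mathrm{dist}(\vec v_n,\mathrm{Ker}[\mathcal{A}_d])^2\ge\int_{\operatorname{supp}\alpha\cap B(x_0,\delta_n)}|\vec v_n|^2$, so the task reduces to shaping $\vec v_n$ so that a fixed fraction of its $L^2$-mass lies in $\{\alpha>0\}$; the remaining steps are routine.
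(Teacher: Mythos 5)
Your proposal is essentially correct but follows a genuinely different route from the paper in both directions. The paper proves both halves through a single reduction: by the Haraux-type equivalence it cites, exponential stability of \rfb{fluide} is equivalent to the observability inequality \rfb{obs1.1} for the conservative system \rfb{fluideb}. The negative half then follows because constant-in-time solenoidal solutions concentrated where $\alpha\le\delta$ make the observability constant collapse as $\delta\to 0$, and the positive half follows from the Helmholtz decomposition: the solenoidal part is trivially observed when $\alpha\ge\alpha_->0$ on all of $\Omega$, while the gradient part reduces to (full-domain) observability for the Neumann wave equation. You avoid the equivalence entirely: your negative half is a direct quasimode/Duhamel argument for the contraction semigroup (near-equilibria $(\vec v_n,0)$ with $\|\mathcal{A}_d(\vec v_n,0)\|\le 1/n$ are incompatible with a uniform exponential rate), and your positive half is a self-contained Lyapunov argument with the functional $E-\eta\int_\Omega\vec u\cdot\nabla\psi\,dx$, where $\Delta\psi=r$, $\nabla\psi\cdot n|_\Gamma=0$. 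I checked the computations in the positive half (the identity $I'=\|r\|^2-\int_\Omega\alpha\,\vec u\cdot\nabla\psi\,dx-\|\nabla\psi_t\|^2$, the bounds $\|\nabla\psi\|\le C\|r\|$ and $\|\nabla\psi_t\|\le\|\vec u\|$, and the Gronwall step); they are correct, and this argument is more explicit than the paper's, which leans on an ``obviously satisfied'' observability claim for the wave equation. Your negative half, once completed, also yields a sharper statement than the paper's: failure of uniform decay for data orthogonal to $\mathrm{Ker}\,\mathcal{A}_d$, a point the paper does not address.

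The one genuine gap is the step you flagged, and as you set it up it is not quite routine. With $\vec v_n$ a rescaled bump supported in $B(x_0,\delta_n)\cap\Omega$ around a fixed $x_0\in\partial\{\alpha>0\}$, there is no reason a fixed fraction of its $L^2$-mass lies in $\{\alpha>0\}$: a smooth nonnegative $\alpha$ can have a positivity set with Lebesgue density $0$ at $x_0$ (a cusp), in which case every bump centered at $x_0$ carries asymptotically all of its mass inside the interior of $\{\alpha=0\}$, so $\|\psi_n\|$ may tend to $0$ and the contradiction evaporates. The repair is to place the support \emph{inside} $\{\alpha>0\}$ rather than merely near it: the set $\{\alpha>0\}\cap B(x_0,\delta_n)\cap\Omega$ is open and nonempty (because $x_0$ lies in the closure of the open set $\{\alpha>0\}$ and $\Omega$ is open and dense in $\overline\Omega$; the existence of such an $x_0$ uses that the domain is connected), so it contains a small ball, and you take $\vec v_n$ to be a divergence-free unit bump supported in that ball. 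The constraint $\alpha\le 1/n$ on $\mathrm{supp}\,\vec v_n$ still holds since $\mathrm{supp}\,\vec v_n\subset B(x_0,\delta_n)$. Now $\mathrm{supp}\,\vec v_n\subset\{\alpha>0\}\subset\mathrm{supp}\,\alpha$, and since every $(\vec w,c)\in E=\mathrm{Ker}\,\mathcal{A}_d$ satisfies $\vec w=0$ on $\mathrm{supp}\,\alpha$, you get $\langle(\vec v_n,0),(\vec w,c)\rangle_H=0$; that is, $(\vec v_n,0)\in H_0$ already, so $\psi_n=(\vec v_n,0)$ and $\|\psi_n\|=1$, with no projection estimate needed. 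With this modification your argument closes as written.
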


\begin{proof}
According to \cite{haraux:89a} (see also \cite{ammari:01}),  the exponential stability of the system \rfb{fluide} is equivalent that the undamped system, i.e.
\be
\label{fluideb}
\left\{
\begin{array}{l}
\vec{\phi}_t + \nabla p  = 0, \,\hbox{ in } \Omega \times \rline^+, \\
p_t + div \vec{\phi} = 0, \,\hbox{ in } \Omega \times \rline^+, \\
\vec{\phi} \cdot n = 0, \,\hbox{ on } \Gamma \times \rline^+, \\
\vec{\phi}(0,x) = \vec{\phi}^0(x), \, p(0,x) = p^0(x), \, x \in \Omega,
\end{array}
\right.
\ee
satisfies the following inequality:
There exist positive real numbers $T,C$ such that
\be\label{obs1.1}
\int_0^T \int_\Omega \alpha(x) \, \left|\vec{\phi}(x,t) \right|^2 \, dxdt \geq \, C \, \left\|(\vec{\phi}^0,p^0)\right\|^2_{(L^2(\Omega)^d \times L^2(\Omega)}
\ee
\[
\forall \,(\vec{\phi}^0,p^0) \in (L^2(\Omega))^d \times L^2(\Omega)
\ \ \mbox{such that}\ 
\int_{\Omega} p^0 \ {\rm d}x = 0.
\]
Since this estimate is well-defined in the energy space, it holds if and only if it holds for strong solutions.

As the conservative system (\ref{obs1.1}) admits solutions that are constant in time, namely, 
\[
\vec u \in L^2(\Omega), \ \nabla \cdot \vec u |_{\Gamma} = 0,\ p = 0,
\]
it is clear that exponential stability cannot hold if $\inf_{x \in \Omega} \alpha (x) = 0$. Indeed, as $\alpha$ is smooth, 
we can always find for any $\delta > 0$ small a solenoidal compactly supported function $\vec \phi^0$ in $\Omega$ such that 
\[
\alpha|_{{\rm supp}\vec \phi^0} \leq \delta.
\]
Consequently, relation (\ref{obs1.1}) cannot holds \emph{uniformly} for any choice of the data. 

On the other hand, suppose that $\alpha$ is bounded below away from zero on the whole set $\Omega$. Writing 
the vector field $\vec u$ as its Helmholtz decomposition 
\begin{equation}\label{helmholtz}
\vec u = \vec{H}[\vec u] + \nabla \varphi,
\end{equation}
where $\vec{H}$ denotes the standard Helmholtz projection onto the space of solenoidal functions, it is enough to 
verify the observability criterion (\ref{obs1.1}) for $\vec \phi = \nabla \varphi$. In such a case, however, the 
conservative system (\ref{fluideb}) reduces to the standard \emph{wave} equation and (\ref{obs1.1}) is obviously 
satisfied as the damping acts uniformly on the whole domain $\Omega$ (see also Section \ref{section5}). 
\end{proof}

\section{Changing the damping law} \label{section5}
\setcounter{equation}{0}

As we have seen before, system \rfb{fluide} is exponentially stable if and only if $\alpha$ is uniformly positive definite. We will show in this section that if we change the feedback law in order to  filter
the divergence free vector fields, then we will get exponential  stability for a quite large set of $\alpha$.

In view of the  Helmholtz decomposition (\ref{helmholtz}),
denote by $P$ the orthogonal projection on the closed subspace of $L^2(\Omega)^d$
$$
V:=\left\{\nabla \varphi:\varphi\in H^1(\Omega) \int_{\Omega} \varphi \ {\rm d}x = 0 \right\}.
$$
Then in \rfb{fluide} we change the damping term $\alpha \, \vec{u}$ by
$P(\alpha \, P\vec{u})$ and consider the system

\be
\label{fluidemod}
\left\{
\begin{array}{l}
\vec{u}_t + \nabla r + P(\alpha \, P\vec{u}) = 0, \, \hbox{ in }\Omega \times \rline^+, \\
r_t + div \vec{u} = 0, \,\hbox{ in } \Omega \times \rline^+, \\
\vec{u} . n = 0, \, \hbox{ on }\Gamma \times \rline^+, \\
\vec{u}(0,x) = \vec{u}^0(x), \, r(0,x) = r^0(x), \, x \in \Omega.
\end{array}
\right.
\ee

Accordingly, we arrive at the system 
\be
\label{fluidemodnabla}
\left\{
\begin{array}{l}
\nabla \varphi_t + \nabla r + P(\alpha \, \nabla \varphi) = 0, \, \hbox{ in }\Omega \times \rline^+, \\
r_t + \Delta \varphi = 0, \,\hbox{ in } \Omega \times \rline^+, \\
\nabla \varphi \cdot n = 0, \, \hbox{ on }\Gamma \times \rline^+, \\
\varphi(0,x) = \varphi^0(x), \, r(0,x)=r^0(x), \, x \in \Omega,
\end{array}
\right.
\ee
where
\[
\int_{\Omega} \varphi^0 \ {\rm d}x = \int_{\Omega} r^0 \ {\rm d}x = 0.
\]

By virtue of  \cite{haraux:89a} (see also \cite{ammari:01}),  the exponential stability of the system \rfb{fluidemodnabla} on $V\times V$ is equivalent to the following property of solutions to the \emph{undamped} system:

There exist a positive real number $T_0$ such that for all $T>T_0$, there exists $C>0$ such that
\be\label{obs1.1mod}
\int_0^T \int_\Omega \alpha(x) \, \left|\nabla{\psi}(x,t) \right|^2 \, dxdt \geq \, C \, \left\|(\nabla \psi^0,p^0)\right\|^2_{(L^2(\Omega)^d \times L^2(\Omega)}
\ee 
for any $\psi$, $p$ satisfying 
\be
\label{fluidebmod}
\left\{
\begin{array}{l}
\nabla \psi_t + \nabla p  = 0, \,\hbox{ in } \Omega \times \rline^+, \\
p_t + \Delta \psi = 0, \, \hbox{ in }\Omega \times \rline^+, \\
\nabla \psi \cdot n = 0, \,\hbox{ on } \Gamma \times \rline^+, \\
{\psi}(0,x) = {\psi}^0(x),  \, p(0,x) = p^0(x), \, x \in \Omega,
\end{array}
\right.
\ee
\[
\psi^0 \in H^1(\Omega), \ p^0 \in L^2(\Omega), \ \int_{\Omega} \psi^0 \ {\rm d}x = \int_{\Omega} p^0 
{\rm d}x = 0.
\]

However, the system (\ref{fluidebmod}) can be written in the form of a standard \emph{wave equation} with 
the Neumann Laplacian: 
\be
\label{waveNeumannmod}
\left\{
\begin{array}{l}
\psi_{tt} + \Delta {\psi} = 0, \, \hbox{ in }\Omega \times \rline^+, \\
\nabla \psi \cdot n = 0, \, \hbox{ on } \Gamma \times \rline^+,\\
{\psi}(0,x) = \psi^0(x), \, {\psi}_t(0,x) = \psi^1(x)= -p^0(x), \, x \in \Omega,
\end{array}
\right.
\ee 
whereas the \emph{observability inequality} (\ref{obs1.1mod}) reduces to 
\be\label{obs1.1moda}
\int_0^T \int_\Omega \alpha(x) \, \left|\nabla{\psi}(x,t) \right|^2 \, dxdt \geq \, C \, \left\|(\nabla \psi^0, \psi^1) \right\|^2_{(L^2(\Omega)^d \times L^2(\Omega)}.
\ee 

Thus we have obtained the following exponential stability result:

\begin{theorem} \label{stabchangstab}
The system \rfb{fluidemod} is exponentially stable, meaning, 
\[
\| (\vec u, r)(t, \cdot) \|_H \leq \exp(-Lt) \| (\vec u^0, r^0) \|_H \ \mbox{for a certain}\ L > 0,
\]
whenever
\[
\vec u^0 = \nabla \varphi^0, \varphi^0 \in H^1(\Omega), \ \int_{\Omega} \varphi^0 \ {\rm d}x = 
\int_{\Omega} r^0 \ {\rm d}x  = 0,
\]
if and only if any solution $\psi$ of the wave equation \rfb{waveNeumannmod} satisfies the observability inequality \rfb{obs1.1moda}. 
\end{theorem}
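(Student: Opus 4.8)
The plan is to invoke the abstract equivalence between exponential decay of a linearly damped conservative system and an observability inequality for the underlying undamped system, as established in Haraux \cite{haraux:89a} (see also \cite{ammari:01}). To apply it I would first recast \eqref{fluidemod} in the abstract form $\vec{Z}_t + \mathcal{A}\vec{Z} + \mathcal{C}\mathcal{C}^*\vec{Z} = 0$. The operator $\mathcal{A}$ is skew-adjoint on $H$: an integration by parts using $\vec{u}\cdot n|_\Gamma = 0$ on both arguments shows $\langle \mathcal{A}\vec{Z}, \vec{W}\rangle_H = -\langle \vec{Z}, \mathcal{A}\vec{W}\rangle_H$, so $-\mathcal{A}$ generates a unitary $C_0$-group. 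The modified damping is the bounded nonnegative self-adjoint feedback $\mathcal{C}\mathcal{C}^*$, where the observation operator $\mathcal{C}^* \in \mathcal{L}(H,(L^2(\Omega))^d)$ is given by $\mathcal{C}^*(\vec{u},r) = \sqrt{\alpha}\,P\vec{u}$, so that $\mathcal{C}\mathcal{C}^*(\vec{u},r) = (P(\alpha P\vec{u}),0)$ (using that $P$ and multiplication by $\sqrt{\alpha}$ are self-adjoint). These are precisely the structural hypotheses required by the equivalence theorem.

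Next I would isolate the invariant subspace on which the decay is claimed. Because $\nabla r \in V$, $P(\alpha P\vec{u}) \in V$, and $\div\,\vec{u}$ has zero mean whenever $\vec{u}\cdot n|_\Gamma = 0$, the set of states with $\vec{u} = \nabla\varphi \in V$ and $\int_\Omega r\,{\rm d}x = 0$ is left invariant by the flow; the divergence-free component of $\vec{u}$ decouples and is unaffected by the filtered feedback, which is exactly the purpose of the modification. On this subspace the system reduces to \eqref{fluidemodnabla}. Applying the equivalence theorem there, exponential stability of \eqref{fluidemod} for the stated data is equivalent to the observability inequality $\int_0^T \|\mathcal{C}^*\vec{Z}(t)\|^2\,{\rm d}t \geq C\|\vec{Z}^0\|^2$ for the conservative flow $\vec{Z}(t) = e^{-t\mathcal{A}}\vec{Z}^0$ solving \eqref{fluidebmod}. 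Since $P\nabla\psi = \nabla\psi$ on $V$, one has $\mathcal{C}^*(\nabla\psi,p) = \sqrt{\alpha}\,\nabla\psi$, so the left-hand side is precisely $\int_0^T\int_\Omega \alpha|\nabla\psi|^2\,{\rm d}x\,{\rm d}t$ and the observability inequality is exactly \eqref{obs1.1mod}.

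It then remains to rewrite the conservative system \eqref{fluidebmod} as the scalar wave equation \eqref{waveNeumannmod}. From $\nabla\psi_t + \nabla p = 0$ we get $\nabla(\psi_t + p) = 0$, and the mean-zero normalization of $\psi$ and $p$ forces $p = -\psi_t$; inserting this into $p_t + \Delta\psi = 0$ yields the second-order equation in \eqref{waveNeumannmod} together with the Neumann condition and the identification $\psi^1 = -p^0$. Under this identification $\|(\nabla\psi^0,p^0)\| = \|(\nabla\psi^0,\psi^1)\|$, so \eqref{obs1.1mod} and \eqref{obs1.1moda} coincide and the chain of equivalences closes.

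The principal obstacle I anticipate is the functional-analytic bookkeeping rather than any single hard estimate: one must verify that $\mathcal{A}$ is genuinely skew-adjoint on $\mathcal{D}(\mathcal{A})$ (not merely skew-symmetric), so that the group is truly unitary and the hypotheses of \cite{haraux:89a,ammari:01} apply, and one must confirm that the restriction to the gradient/mean-zero subspace is internally consistent so that the reduction to the scalar wave equation is legitimate. It is worth stressing that this theorem delivers only the \emph{equivalence}; the genuine verification of the observability inequality \eqref{obs1.1moda} for the Neumann wave equation, which ultimately hinges on a geometric control condition for $\omega$, is a separate matter not settled by the present argument.
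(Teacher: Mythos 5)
Your proposal is correct and takes essentially the same route as the paper: both arguments invoke the equivalence of \cite{haraux:89a} (see also \cite{ammari:01}) between exponential decay of the damped system and an observability inequality for the conservative one, restrict to the invariant gradient/mean-zero subspace where \eqref{fluidemod} reduces to \eqref{fluidemodnabla}, and then rewrite the conservative system \eqref{fluidebmod} as the Neumann wave equation \eqref{waveNeumannmod} via the identification $p=-\psi_t$, under which \eqref{obs1.1mod} becomes \eqref{obs1.1moda}. You in fact supply more of the structural bookkeeping than the paper does---the explicit observation operator $\mathcal{C}^*(\vec u,r)=\sqrt{\alpha}\,P\vec u$ with $\mathcal{C}\mathcal{C}^*(\vec u,r)=(P(\alpha P\vec u),0)$, the skew-adjointness of $\mathcal{A}$, and the invariance of the subspace---which the paper leaves implicit by simply citing the references.
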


Validity of the observability inequality (\ref{obs1.1moda}) is related to the general discussion in Bardos et al. \cite{BDL_92} (see also Zuazua \cite{zuazua:88,zuazua:91}). Here we note that (\ref{obs1.1moda}) holds
provided
$\omega$ contains a neighborhood  of the whole boundary $\Gamma$, in the sense that
there exists a  neighborhood   $O$ of $\Gamma$ in $\rline^d$ such that $\Omega\cap O\subset \omega$.
First we notice that by the arguments of section 4 of \cite{Martinez:99} with $a=0$ and $M(u)=m\nabla u+\frac{d-1}{2} u$ (see estimate (4.10)),
where,
as usual, $m(x)=x-x_0$ for some $x_0\in \rline^d$, there exits $C>0$ such that
for all $T>0$, we have
\be\label{estmartinez}
2T E(0)\leq C (E(0)+\int_0^T\int_{\Omega\cap O'}(|\psi_t|^2+|\nabla \psi|^2+|\psi|^2) dx dt,
\ee
where $O'$ is a sufficiently small neighbourhood of the boundary (such that
$\overline{O'}\subset O$). On the other hand,
fixing a cut-off function $\eta$ such that $\eta\equiv 1$ on $O'$
and with a support $O$, we can write
$$
\int_0^T\int_{\Omega\cap O'}|\psi_t|^2dx dt\leq \int_0^T\int_{\Omega} \eta|\psi_t|^2dx dt.
$$
Hence by integration by parts in time, we get
$$
\int_0^T\int_{\Omega\cap O'}|\psi_t|^2dx dt\leq -\int_0^T\int_{\Omega} \eta \psi_{tt} \bar \psi dx dt
+\int_{\Omega} \eta \psi_{t} \bar \psi dx\Big|_0^T.
$$
Now using the first identity in \rfb{waveNeumannmod} and an integration by parts in space, we get
$$
\int_0^T\int_{\Omega\cap O'}|\psi_t|^2dx dt\leq \int_0^T\int_{\Omega}   \nabla \psi\cdot  (\nabla \eta \bar \psi) dx dt
+\int_{\Omega} \eta \psi_{t} \bar \psi dx\Big|_0^T.
$$
Therefore using Leibniz's rule and Cauchy-Schwarz's inequality we obtain that
$$
\int_0^T\int_{\Omega\cap O'}|\psi_t|^2dx dt\leq C_1\int_0^T\int_{\Omega\cap O}   (|\nabla \psi|^2+|\psi|^2)dx dt
+C_2 E(0),
$$
for some positive constants $C_1, C_2$ independent of $T$.
Inserting this estimate in \rfb{estmartinez} we get
\be\label{estmartinez2}
2T E(0)\leq (C+C_2)E(0)+C_4\int_0^T\int_{\Omega\cap O}(|\nabla \psi|^2+|\psi|^2) dx dt,
\ee
where $C_4=\max\{1, C_1\}$.
To eliminate the last term of this right-hand side we use a compacteness/uniqueness argument.
Namely  assume that \rfb{obs1.1mod} does not hold.
Then there exists a sequence of $(\psi_\ell)_{\ell\in \nline}$ solution of \rfb{waveNeumannmod} with initial data
$\psi^0_\ell$ and $\psi^1_\ell$ such that
\be\label{energy1}
\|\nabla \psi^0_\ell\|+\|\psi^1_\ell\|=1,
\ee
and
\be\label{intterm}
\int_0^T \int_\Omega \alpha(x) \, \left|\nabla{\psi_\ell}(x,t) \right|^2 \, dxdt=\frac1\ell.
\ee
Since the system \rfb{waveNeumannmod} is conservative, the sequence
$(\psi_\ell)$ is bounded in $H^1(Q_T)$, where $Q_T=\Omega\times (0,T))$. Hence it converges strongly
in $L^2(Q_T)$ to $\psi$ a weak solution (in $H^1(Q_T)$ of \rfb{waveNeumannmod} with initial data $\psi^0, \psi^1$. But thanks to \rfb{estmartinez2}
applied to $\psi_\ell-\psi_{\ell'}$, we deduce that the sequence $(\psi_\ell)_{\ell\in \nline}$ is a Cauchy sequence in $H^1(Q_T)$. Hence $\psi_\ell$ converges in $H^1(Q_T)$ to $\psi$, weak solution of \rfb{waveNeumannmod} and satisfying
\be\label{energy1psi}
\|\nabla \psi^0\|+\|\psi^1\|=1,
\ee
and
\be\label{inttermpsi}
\int_0^T \int_\Omega \alpha(x) \, \left|\nabla{\psi}(x,t) \right|^2 \, dxdt=0.
\ee
Accordingly
$$\nabla \psi=0 \hbox{ in } \omega\times (0,T),
$$
or quivalently
$$
\psi(x,t)=\psi(t) \hbox{ in } \omega\times (0,T).
$$
But again due to the first identity in \rfb{waveNeumannmod},
$$
\psi_{tt}(t)=0 \hbox{ in } \omega\times (0,T),
$$
hence there exists two complex numbers $a,b$ such that
$$
\psi(x,t)=a t+b \hbox{ in } \omega\times (0,T).
$$
Now consider the difference
$$
\tilde \psi=\psi-at-b.$$ Then we see that it a weak solution of \rfb{waveNeumannmod}
such that
$$
\tilde \psi=0 \hbox{ in } \omega\times (0,T).
$$
By using Theorem 9.1 of \cite{Triggiani-Yao_02}, we deduce that
$$
\tilde \psi=0 \hbox{ on } Q_T,
$$
or equivalently
$$
\psi(x,t)=a t+b \hbox{ on } Q_T.
$$
But in our situation $\psi^0$ belongs to $H^1_m(\Omega)$
and therefore $\psi(\cdot, t)$ belongs to $H^1_m(\Omega)$, for all $t>0$.
As a consequence we deduce that
$$
0=\int_\Omega\psi(x,t)\,dx=(a t+b)|\Omega|, \forall t>0.
$$
This imples that $a=b=0$ and contradicts \rfb{energy1psi}.

Thus we have shown the following:

\begin{corollary} \label{CCC1}
Suppose that
\[
\inf_{x \in \Gamma} \alpha(x) > 0.
\]

Then the system (\ref{fluidemod}) is exponentially stable in the sense specified in Theorem 
\ref{stabchangstab}.
\end{corollary}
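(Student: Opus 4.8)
The plan is to apply Theorem~\ref{stabchangstab}, which reduces the claimed exponential stability of (\ref{fluidemod}) to verifying the observability inequality (\ref{obs1.1moda}) for every solution $\psi$ of the Neumann wave equation (\ref{waveNeumannmod}). The entire task is therefore to produce a time $T$ and a constant $C > 0$ for which (\ref{obs1.1moda}) holds. First I would translate the hypothesis into a geometric statement: since $\alpha \in \mathcal{C}^\infty(\overline{\Omega})$ and $\inf_{x\in\Gamma}\alpha(x) > 0$, continuity yields an open neighbourhood $O$ of $\Gamma$ in $\rline^d$ and a constant $\alpha_0 > 0$ with $\alpha \ge \alpha_0$ on $\overline{\Omega\cap O}$; in particular $\omega$ contains a neighbourhood of the whole boundary, which is exactly the configuration for which the multiplier method is available.

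Second, I would derive the ``direct'' multiplier estimate. Using the standard multiplier $M(u) = m\cdot\nabla u + \tfrac{d-1}{2}u$ with $m(x) = x - x_0$, as in Martinez, one obtains, for a slightly smaller neighbourhood $O'$ with $\overline{O'}\subset O$,
\[
2T\,E(0) \le C\Big(E(0) + \int_0^T\!\!\int_{\Omega\cap O'} \big(|\psi_t|^2 + |\nabla\psi|^2 + |\psi|^2\big)\,dx\,dt\Big).
\]
The velocity term is then eliminated by fixing a cut-off $\eta$ with $\eta\equiv 1$ on $O'$ and $\mathrm{supp}\,\eta\subset O$, integrating $\int\!\int \eta\,|\psi_t|^2$ by parts in time, and invoking $\psi_{tt} = -\Delta\psi$ from (\ref{waveNeumannmod}) together with an integration by parts in space. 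This trades $|\psi_t|^2$ for gradient and zeroth-order terms supported in $\Omega\cap O$, leaving an inequality of the form
\[
2T\,E(0) \le C' E(0) + C''\int_0^T\!\!\int_{\Omega\cap O}\big(|\nabla\psi|^2 + |\psi|^2\big)\,dx\,dt.
\]
Because $\alpha \ge \alpha_0$ on $\Omega\cap O$, the gradient term here is bounded by $\alpha_0^{-1}\int\!\int_\Omega \alpha\,|\nabla\psi|^2$, which is precisely the observation quantity on the left of (\ref{obs1.1moda}); absorbing $C' E(0)$ into the left-hand side for $T$ large, only the lower-order integral $\int\!\int_{\Omega\cap O}|\psi|^2$ still needs to be removed.

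The main obstacle is exactly this last absorption, which I would handle by a compactness/uniqueness argument. Assuming (\ref{obs1.1mod}) fails, one gets a normalized sequence $\psi_\ell$ of solutions with $\|\nabla\psi^0_\ell\| + \|\psi^1_\ell\| = 1$ whose observation integral tends to $0$. Energy conservation bounds $\psi_\ell$ in $H^1(Q_T)$, so Rellich compactness gives strong $L^2(Q_T)$ convergence; applying the direct inequality to the differences $\psi_\ell - \psi_{\ell'}$ upgrades this to a Cauchy property in $H^1(Q_T)$, producing a limit $\psi$ with unit energy and vanishing observation. Then $\nabla\psi = 0$ on $\omega\times(0,T)$, so $\psi = \psi(t)$ there, and $\psi_{tt} = -\Delta\psi = 0$ forces $\psi = at + b$ on $\omega\times(0,T)$. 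Subtracting this affine-in-time solution and applying the unique continuation theorem for the wave operator (Theorem~9.1 of \cite{Triggiani-Yao_02}) propagates the vanishing to all of $Q_T$, so $\psi(x,t) = at + b$ throughout.

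Finally, the mean-zero constraint $\psi^0 \in H^1_m(\Omega)$ gives $(at+b)|\Omega| = 0$ for all $t > 0$, whence $a = b = 0$, contradicting the unit normalization and establishing (\ref{obs1.1mod}). The delicate ingredient is the unique continuation step, whose validity rests on the wave-operator continuation result together with the geometric fact that $\omega$ surrounds $\Gamma$; the multiplier estimate itself is essentially routine once the boundary-layer geometry is fixed.
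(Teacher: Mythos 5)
Your proposal is correct and follows essentially the same route as the paper: reduction via Theorem~\ref{stabchangstab}, the Martinez multiplier estimate \rfb{estmartinez} with the cut-off argument to remove $|\psi_t|^2$, and the compactness/uniqueness argument (with the Triggiani--Yao unique continuation theorem and the mean-zero constraint killing the affine solution $at+b$) to absorb the lower-order term. The only additions are minor explicitations the paper leaves implicit, namely deducing the boundary-neighbourhood geometry from $\inf_\Gamma \alpha>0$ by continuity and bounding the gradient term by $\alpha_0^{-1}\int\!\!\int_\Omega \alpha|\nabla\psi|^2$.
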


\section{Polynomial stability} \label{section6}
\setcounter{equation}{0}

In this section we show that system \rfb{fluide} is polynomially stable
if $\omega$ contains a neighborhood  of the whole boundary $\Gamma$ as in Corollary \ref{CCC1}.

This result is based on  the following result
stated in Theorem 2.4 of \cite{borichev:10} (see also
\cite{batkai:06,batty:08,Liu_Rao_05} for weaker variants).
\begin{lemma}\label{lemrao}
A $C_0$ semigroup $e^{t{\mathcal L}}$ of contractions on a Hilbert
space such that
\begin{equation}
\rho ({\mathcal L})\supset \bigr\{i \beta \bigm|\beta \in \mathbb{R}
\bigr\} \equiv i \mathbb{R}, \label{1.8w} \end{equation}
satisfies
$$||e^{t{\mathcal L}}U_0|| \leq C \, {t^{-\frac{1}{l}}} ||U_0||_{{\mathcal D}({\mathcal L})},\quad
 \forall U_0\in\mathcal{D}(\mathcal{L}),\quad\forall t>1,$$
 as well as
$$
||e^{t{\mathcal L}}U_0|| \leq C \, t^{-1} ||U_0||_{{\mathcal
D}({\mathcal L}^l)},\quad
 \forall U_0\in\mathcal{D}(\mathcal{L}^l),\quad\forall t>1,$$
for some constant $C >0$ and for some positive integer $l$ if and only if
\begin{equation} \limsup_{|\beta |\to \infty } \frac{1}{\beta^l} \,
\|(i\beta -{\mathcal L})^{-1}\| <\infty. \label{1.9}
\end{equation}
\end{lemma}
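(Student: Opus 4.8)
The plan is to establish the Borichev--Tomilov characterization, whose essential content is that on a \emph{Hilbert} space a polynomial resolvent bound on the imaginary axis is \emph{equivalent} to a polynomial decay rate for the semigroup. The argument splits into two implications: the forward one (decay $\Rightarrow$ resolvent estimate) is the routine direction, which does not use the Hilbert structure and follows from the Laplace-transform representation $(\lambda-\mathcal{L})^{-1}=\int_0^\infty e^{-\lambda t}e^{t\mathcal{L}}\,dt$ (valid for $\operatorname{Re}\lambda>0$) combined with the decay hypothesis and a passage to the limit $\operatorname{Re}\lambda\to 0^+$. The converse (resolvent estimate $\Rightarrow$ decay) is the crux, and it is exactly here that Plancherel's theorem is indispensable. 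I would also first dispose of the equivalence between the two decay statements in the conclusion: since $i\mathbb{R}\subset\rho(\mathcal{L})$ forces $0\in\rho(\mathcal{L})$, the power $\mathcal{L}^{-1}$ is bounded, and the moment (interpolation) inequality $\|\mathcal{L}^{-1}y\|\le C\|\mathcal{L}^{-l}y\|^{1/l}\|y\|^{1-1/l}$ applied to $y=e^{t\mathcal{L}}x$ turns $\|e^{t\mathcal{L}}\mathcal{L}^{-l}\|=O(t^{-1})$ into $\|e^{t\mathcal{L}}\mathcal{L}^{-1}\|=O(t^{-1/l})$, which is the estimate phrased with the $\mathcal{D}(\mathcal{L})$-norm.

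The decisive preliminary for the converse is the following Hilbert-space estimate. Write $M=\sup_{t\ge 0}\|e^{t\mathcal{L}}\|$ and, for $\eta>0$, apply Plancherel to the $L^2$ function $t\mapsto e^{-\eta t}e^{t\mathcal{L}}x$ (extended by $0$ for $t<0$), whose Fourier transform in $t$ is $\beta\mapsto(\eta+i\beta-\mathcal{L})^{-1}x$. This yields the vertical-line bound $\int_{\mathbb{R}}\|(\eta+i\beta-\mathcal{L})^{-1}x\|^2\,d\beta\le \frac{\pi M^2}{\eta}\|x\|^2$, and dually the same bound for the adjoint resolvent. This square-integrability along lines $\operatorname{Re}\lambda=\eta>0$, with its precise $1/\eta$ scaling, is the feature that distinguishes Hilbert spaces and makes the sharp rate attainable.

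The converse is then carried out by a contour deformation. First, the hypothesis $\limsup_{|\beta|\to\infty}|\beta|^{-l}\|(i\beta-\mathcal{L})^{-1}\|<\infty$, together with continuity of the resolvent on $i\mathbb{R}$, gives $\|(i\beta-\mathcal{L})^{-1}\|\le C(1+|\beta|)^l$, and a Neumann-series perturbation extends the resolvent, with the same order of growth, to the cuspidal region $\{\sigma+i\beta:\sigma>-c(1+|\beta|)^{-l}\}$ lying slightly to the left of $i\mathbb{R}$. Next I would represent $e^{t\mathcal{L}}\mathcal{L}^{-l}x$ by the inverse Laplace integral $\frac{1}{2\pi i}\int_{\epsilon-i\infty}^{\epsilon+i\infty}e^{\lambda t}(\lambda-\mathcal{L})^{-1}\mathcal{L}^{-l}x\,d\lambda$ and shift the line $\operatorname{Re}\lambda=\epsilon$ onto the boundary $\sigma=-c(1+|\beta|)^{-l}$, where the factor $e^{\lambda t}$ produces the gain $e^{-ct(1+|\beta|)^{-l}}$. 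The main obstacle is the final estimate: the pointwise growth $(1+|\beta|)^l$ alone makes this contour integral diverge, so one must instead invoke the Plancherel estimate above and apply the Cauchy--Schwarz inequality, trading the pointwise growth for a square-integrable factor; optimizing the resulting bound against the exponential weight along the cusp is what produces precisely the sharp $t^{-1}$ rate. This interplay between the $L^2$ resolvent bound and the cuspidal gain is the heart of the matter, and the reason the conclusion is sharp on Hilbert spaces while it generally fails without loss on Banach spaces.
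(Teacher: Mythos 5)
The paper contains no proof of this lemma at all: it is quoted verbatim as Theorem~2.4 of \cite{borichev:10} (with \cite{batkai:06,batty:08,Liu_Rao_05} cited for weaker variants), so what you have attempted is a proof of the Borichev--Tomilov theorem itself, and it must be judged against that source. Several of your ingredients are genuinely the right ones: $0\in\rho(\mathcal{L})$ makes $\mathcal{L}^{-1}$ bounded; the moment (Landau--Kolmogorov/Kallman--Rota type) inequality for powers of generators of bounded semigroups does convert $\|e^{t\mathcal{L}}\mathcal{L}^{-l}\|=O(t^{-1})$ into $\|e^{t\mathcal{L}}\mathcal{L}^{-1}\|=O(t^{-1/l})$; and the vertical-line Plancherel bound $\int_{\mathbb{R}}\|(\eta+i\beta-\mathcal{L})^{-1}x\|^2\,d\beta\le \pi M^2\eta^{-1}\|x\|^2$, with its adjoint counterpart, is precisely the Hilbert-space ingredient. (Your ``routine'' forward direction is stated too loosely, though: under $O(t^{-1})$ or $O(t^{-1/l})$ decay the integral $\int_0^\infty e^{-i\beta t}e^{t\mathcal{L}}\mathcal{L}^{-1}x\,dt$ is not absolutely convergent, so one cannot ``pass to the limit $\operatorname{Re}\lambda\to 0^+$''; the standard repair is the truncated identity $(i\beta-\mathcal{L})^{-1}x=\int_0^T e^{-i\beta t}e^{t\mathcal{L}}x\,dt+e^{-i\beta T}e^{T\mathcal{L}}(i\beta-\mathcal{L})^{-1}x$ with $T\sim |\beta|^l$ and an absorption argument.)

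The genuine gap is in the converse, exactly at the step you call the heart of the matter. The Plancherel estimate you derive lives \emph{only} on vertical lines $\operatorname{Re}\lambda=\eta>0$, because it is the $L^2$-identity for $t\mapsto e^{-\eta t}e^{t\mathcal{L}}x$, which is square-integrable only when $\eta>0$. On your deformed contour $\operatorname{Re}\lambda=-c(1+|\beta|)^{-l}$, which lies in the \emph{left} half-plane, no such $L^2$ bound is available; all you have there is the pointwise Neumann-series bound $O((1+|\beta|)^l)$. Contour deformation into a left cusp controlled by pointwise bounds is precisely the Batty--Duyckaerts Banach-space argument \cite{batty:08}, and that scheme provably yields only $t^{-1}$ up to a logarithmic loss (a loss which is unavoidable in general Banach spaces), so no amount of optimization of the exponential weight can rescue it: indeed, for $|\beta|\gtrsim t^{1/l}$ your gain $e^{-ct(1+|\beta|)^{-l}}$ is bounded below by a constant, and that is exactly the frequency range where the integral diverges. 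Attempting to transfer the Plancherel bound across the axis forces $\eta\lesssim(1+|\beta|)^{-l}$, i.e.\ a frequency-dependent shift, which destroys the uniform $1/\eta$ constant and, after a dyadic decomposition, reproduces the logarithm. Moreover, your sketch never introduces the device that makes Cauchy--Schwarz applicable in the first place: one must represent $t\,e^{t\mathcal{L}}$ through the \emph{squared} resolvent (via $R'(\lambda)=-R(\lambda)^2$, an integration by parts in the inversion integral), pair one factor $R(\lambda)$ against $x$ and the adjoint factor $R(\lambda)^*$ against a test vector $y$, and only then apply Cauchy--Schwarz to the two resulting line integrals. The sharp Hilbert-space proof in \cite{borichev:10} works with the resolvent on the imaginary axis and on vertical lines in the right half-plane, where Plancherel is valid, combined with such an adjoint splitting and the pointwise resolvent bound; it never deforms the contour to the left of $i\mathbb{R}$. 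As written, your final ``optimizing the resulting bound \ldots produces precisely the sharp $t^{-1}$ rate'' is an assertion of the theorem, not a proof of it, and along the route you chose it is false without a logarithmic correction.
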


\begin{lemma}\label{lemresolvent} Assume that $\omega$ contains a neighborhood  of the whole boundary $\Gamma$.

Then the resolvent of the operator of $\mathcal{A}_d$ satisfies
condition (\ref{1.9}) with $l=3$.
\end{lemma}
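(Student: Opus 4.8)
The plan is to argue by contradiction, following the standard frequency-domain scheme. Since $i\beta\in\rho({\cal A}_d)$ for every real $\beta\neq0$ by Lemma \ref{resolventiR}, failure of (\ref{1.9}) with $l=3$ produces sequences $\beta_n\to+\infty$ (the case $\beta_n\to-\infty$ being symmetric) and $Z_n=(\vec u_n,r_n)\in{\cal D}({\cal A}_d)$ with $\|Z_n\|_H=1$ and
\[
(\vec f_n,g_n):=(i\beta_n-{\cal A}_d)Z_n,\qquad \beta_n^3\,\|(\vec f_n,g_n)\|_H\to0 .
\]
Writing the two components explicitly gives $i\beta_n\vec u_n+\nabla r_n+\alpha\vec u_n=\vec f_n$ and $i\beta_n r_n+\div\vec u_n=g_n$. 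Taking the real part of $\langle(i\beta_n-{\cal A}_d)Z_n,Z_n\rangle_H$ and using the skew-adjointness of ${\cal A}$ (which follows from $\vec u\cdot n|_\Gamma=0$) I recover the dissipation $\int_\Omega\alpha|\vec u_n|^2\,dx=\mathrm{Re}\,\langle(\vec f_n,g_n),Z_n\rangle_H=o(\beta_n^{-3})$. By (\ref{eq:a}) this yields $\|\vec u_n\|_{L^2(\omega)}=o(\beta_n^{-3/2})$, and then the first equation gives $\|\nabla r_n\|_{L^2(\omega)}\le\|\vec f_n\|+(\beta_n+\|\alpha\|_\infty)\|\vec u_n\|_{L^2(\omega)}=o(\beta_n^{-1/2})$. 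Projecting the first equation onto solenoidal fields also shows that the divergence-free part of $\vec u_n$ is $o(\beta_n^{-5/2})$, so the dynamics is governed by its gradient (wave) component.

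Next I would eliminate $\vec u_n$. Solving the first equation as $\vec u_n=(i\beta_n+\alpha)^{-1}(\vec f_n-\nabla r_n)$ and inserting it into the second produces a Helmholtz-type equation for $r_n$, which in the region $\{\alpha=0\}$ reads $\Delta r_n+\beta_n^2 r_n=\div\vec f_n-i\beta_n g_n$, together with the Neumann-type condition $\partial_n r_n|_\Gamma=\vec f_n\cdot n|_\Gamma$ coming from $\vec u_n\cdot n|_\Gamma=0$. To this equation I apply the Morawetz--Rellich multiplier $m\cdot\nabla\bar r_n+\tfrac{d-1}{2}\bar r_n$ with $m(x)=x-x_0$, exactly the multiplier used in Section \ref{section5} (cf. Martinez \cite{Martinez:99}). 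Taking real parts, the resulting Pohozaev identity has the form
\[
\tfrac12\bigl(\|\nabla r_n\|^2+\beta_n^2\|r_n\|^2\bigr)=\mathcal{J}_\Gamma(r_n)+\mathcal{R}_n,
\]
where $\mathcal{J}_\Gamma(r_n)$ collects the boundary integrals over $\Gamma$ and $\mathcal{R}_n$ the volume integrals of the source and of the damping term against the multiplier.

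The decisive structural input is the hypothesis $\Gamma\subset\omega$: all the boundary contributions in $\mathcal{J}_\Gamma$ live in the collar $\Omega\cap O\subset\omega$, where $\vec u_n$, and hence $\nabla r_n$, are already controlled. I would estimate the genuine traces on $\Gamma$ by trace inequalities combined with local elliptic regularity in the collar; the delicate term $\beta_n^2\int_\Gamma|r_n|^2$ is handled by using the second equation to replace $i\beta_n r_n$ by $g_n-\div\vec u_n$ and integrating the divergence by parts against $\nabla r_n$ (small on $\omega$), which shows that $r_n$ is in fact uniformly small on the collar. As for $\mathcal{R}_n$, the source contributions $\div\vec f_n$ and $\beta_n g_n$ are dealt with by moving derivatives onto the multiplier so that only $\|\vec f_n\|,\|g_n\|$ appear, while the damping contribution $\div(\alpha\vec u_n)$ is integrated by parts (its boundary term vanishing since $\vec u_n\cdot n|_\Gamma=0$) and estimated through the weighted norms $\int_\Omega\alpha|\nabla r_n|^2=o(\beta_n^{-1})$ and $\int_\Omega\alpha|D^2 r_n|^2$. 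Tracking the powers of $\beta_n$ through these estimates, and using $\|\nabla r_n\|^2\sim\beta_n^2\|\vec u_n\|^2$ from the first equation, one arrives at $\|\nabla r_n\|^2+\beta_n^2\|r_n\|^2=o(\beta_n^2)$, hence $\|r_n\|\to0$ and, via $i\beta_n\vec u_n=\vec f_n-\nabla r_n-\alpha\vec u_n$, also $\|\vec u_n\|\to0$; this contradicts $\|Z_n\|_H=1$ and establishes (\ref{1.9}) with $l=3$.

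The main obstacle is the last weighted second-order estimate: controlling $\int_\Omega\alpha|D^2 r_n|^2$ (equivalently $\|D\vec u_n\|_{L^2(\omega)}$) is not provided by the energy dissipation, which only bounds $\vec u_n$ itself in $L^2(\omega)$, so it must be recovered through interior elliptic regularity applied to the Helmholtz equation. It is precisely the powers of $\beta_n$ lost in this regularity step, compounded with those lost in passing from $\nabla r_n$ to the full energy, that force the exponent $l=3$ rather than the value corresponding to exponential decay available for the filtered system of Section \ref{section5}; choosing the multiplier so as to minimise the number of derivatives that fall on the damping term is what keeps the loss down to the third power.
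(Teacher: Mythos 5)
Your outer frame---the contradiction argument feeding into Lemma \ref{lemrao}, the dissipation identity giving $\int_\Omega\alpha|\vec u_n|^2\,dx=o(\beta_n^{-3})$, the consequent smallness of $\vec u_n$ and $\nabla r_n$ on $\omega$, and the $o(\beta_n^{-5/2})$ bound on the solenoidal part of $\vec u_n$---coincides with the paper's proof and is correct. The core of your argument, however, has two genuine gaps, both caused by working with $r_n$ itself and a full-domain Morawetz multiplier. First, a regularity gap: $\vec f_n$ is merely an $L^2$ sequence, so after eliminating $\vec u_n$ the right-hand side of your Helmholtz equation contains $\div \vec f_n\in H^{-1}(\Omega)$, and the ``Neumann datum'' $\vec f_n\cdot n|_\Gamma$ does not exist (an $L^2$ field has no normal trace unless its divergence is in $L^2$). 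For the same reason $\div\vec u_n$ has no trace on $\Gamma$, so your replacement of $i\beta_n r_n$ by $g_n-\div\vec u_n$ inside $\beta_n^2\int_\Gamma|r_n|^2\,d\sigma$ is ill-defined, and ``local elliptic regularity in the collar'' is unavailable as well, since the $\div\vec f_n$ term persists there. Consequently the boundary integrals $\mathcal{J}_\Gamma(r_n)$, in particular $\int_\Gamma(m\cdot n)|\nabla r_n|^2\,d\sigma$, cannot even be written down at the regularity you possess. Second, the obstacle you yourself flag is fatal as described: the weighted bound on $\int_\Omega\alpha|D^2r_n|^2$ cannot come from interior elliptic estimates, because any such estimate reintroduces $\beta_n^2\|r_n\|_{L^2}$, which is of exactly the order of the quantity whose smallness the whole argument is supposed to produce; the step does not close.

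The paper's proof is engineered to avoid both problems, and the device it uses is the one missing from your outline: a Helmholtz decomposition not of $\vec u_n$ but of the \emph{residual} $\vec h_n:=\beta_n^{-l}\vec f_n-\alpha\vec u_n=\nabla\varphi_n+\vec\chi_n$ (see \rfb{8}--\rfb{11}), all of size $o(\beta_n^{-l/2})$ in $L^2$, followed by the change of unknown $s_n=r_n-\varphi_n$. This $s_n$ satisfies \rfb{15}, an equation with $L^2$ right-hand side and an exact homogeneous Neumann condition (since $\vec u_n\cdot n=\vec\chi_n\cdot n=0$ on $\Gamma$), so all integrations by parts are legitimate, and the damping and source terms enter only through their $L^2$ norms: no second derivatives of anything are ever needed. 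Moreover, instead of $m=x-x_0$ the paper takes $m=\eta x$ with $\eta\equiv1$ on $\omega^c$ and $\eta\equiv0$ near $\Gamma$, so that $m$ vanishes on the boundary and \emph{no boundary terms appear at all}; the price is interior terms supported in $\omega$ involving $|s_n|^2$, controlled by the separate estimate \rfb{24}. That estimate is where the hypothesis on $\omega$ truly enters: it is proved with an auxiliary field $h$, ${\rm supp}\,h\subset\omega$, $h\cdot n=1$ on $\Gamma$, which yields the boundary-mean bound \rfb{meanr}, combined with a Poincar\'e inequality. Your idea of pairing $g_n-\div\vec u_n$ against $r_n$ could in fact serve this same purpose, but only if carried out in the volume against a cutoff supported in $\omega$ (then the integration by parts produces no boundary term and lands on $\vec u_n\cdot\nabla\bar r_n$, small on $\omega$); as a trace identity on $\Gamma$ it is meaningless. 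Without the auxiliary variable $s_n$ and the interior-supported multiplier, the estimates your scheme needs are either undefined or cost powers of $\beta_n$ that cannot be recovered.
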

\begin{proof}
We use a contradiction argument, i.e., we suppose that
(\ref{1.9}) is false with some $l\geq 0$. Then there exist a sequence of real numbers
$\beta_n\rightarrow+\infty$ and a sequence of vectors
$Z_n=(\vec{u}_n,r_n)^\top$ in $\mathcal{D}(\mathcal{A}_d)$ with
\be\label{znborne}
\left\|Z_n\right\|_{\mathcal{H}}=1,
\ee and
\begin{equation}\label{serge99}
\beta_n^l\left\|(i\beta_n-\mathcal{A}_d)Z_n\right\|_{\mathcal{H}}\rightarrow 0
\hbox{ as } n \rightarrow \infty.
\end{equation}
From the definition of $\mathcal{A}_d$ this last property is equivalent to
\be\label{1}
\beta_n^l(i\beta_n \vec{u}_n+\nabla r_n+\alpha \vec{u}_n)=: f_n\to 0\hbox{ in } L^2(\Omega)^d,
\ee
and
\be\label{2}
\beta_n^l(i\beta_n r_n+\div \vec{u}_n)=: g_n\to 0\hbox{ in } L^2(\Omega).
\ee

As usual, taking the inner product of \rfb{serge99} with $Z_n$, using \rfb{znborne}
and the dissipativeness of $\mathcal{A}_d$, we find
\be\label{3}
\int_{\Omega} \alpha \, |\vec{u}_n|^2  dx=\Re ((i\beta_n-\mathcal{A}_d)Z_n, Z_n)=o(\beta_n^{-l}),
\ee
where, here and hereafter, 
$a_n=o(b_n)$ means that
$$
\lim_{n\to\infty} \frac{a_n}{b_n}=0.
$$

We multiply \rfb{1} by $\vec{u}_n$, integrate in $\Omega$  and use \rfb{znborne} to find
$$
\int_\Omega (i\beta_n |\vec{u}_n|^2+\nabla r_n\cdot \vec{u}_n+\alpha |\vec{u}_n|^2)\,dx=o(\beta_n^{-l}).
$$
Using \rfb{3} and integrating by parts we obtain
$$
\int_\Omega (i\beta_n |\vec{u}_n|^2- r_n\div \vec{u}_n)\,dx=o(\beta_n^{-l}).
$$
Using \rfb{2} (and \rfb{znborne}) we get
$$
i\beta_n\int_\Omega (|\vec{u}_n|^2- |r_n|^2)\,dx=o(\beta_n^{-l}).
$$
This shows that
\be\label{4}
\|\vec{u}_n\|^2- \|r_n\|^2=o(\beta_n^{-(l+1)}).
\ee

The identity \rfb{1} implies that
\be\label{5}
\nabla r_n=\beta_n^{-l} f_n- (i\beta_n +\alpha) \vec{u}_n,
\ee
and therefore
\be\label{7}
\|\nabla r_n\|=O(\beta_n),
\ee
where, here and hereafter, $a_n=O(b_n)$
means that there exists $C>0$ independent of $n$ such that
$a_n\leq C b_n$ for $n$ large enough.

By \rfb{5}, we also have
$$
\int_{\Omega} \alpha \, |\nabla r_n|^2  dx\leq 2 \int_{\Omega} \alpha |i\beta_n +\alpha|^2 \, |\vec{u}_n|^2  dx+2 \beta_n^{-2l}\int_{\Omega} \alpha  \, |\vec{f}_n|^2  dx.
$$
Hence, for $l\geq 2$, by using \rfb{3} we find that
\be\label{7ter}
\int_{\Omega} \alpha \, |\nabla r_n|^2  dx=o(\beta_n^{2-l}).
\ee

Now we notice that \rfb{1} and \rfb{3} imply that
\be\label{8}
i\beta_n \vec{u}_n+\nabla r_n=\vec{h}_n:=\beta_n^{-l} f_n-\alpha \vec{u}_n,
\ee
with
\be\label{9}
\|\vec{h}_n\|=o(\beta_n^{-l/2}).
\ee

We use the Helmholtz decomposition
\be\label{10}
\vec{h}_n=\nabla \varphi_n+ \vec \chi_n,
\ee
$$
\| \nabla \varphi_n \|_{\Omega}^2+\|\vec \chi_n\|_{\Omega}^2=\|\vec{h}_n\|^2
$$
to deduce that
\be\label{11}
\| \varphi_n \|_{1,\Omega}^2+\|\vec \chi_n\|_{\Omega}^2=o(\beta_n^{-l}).
\ee

The identities \rfb{8} and \rfb{10} yield
$$
i\beta_n \vec{u}_n+\nabla r_n=\nabla \varphi_n+ \vec \chi_n,
$$
or equivalently
\be\label{12}
i\beta_n \vec{u}_n=-\nabla s_n+\vec \chi_n,
\ee
with
\be\label{12bis}
s_n=r_n-\varphi_n.
\ee

As $\vec{u}_n$ is solenoidal, $s_n$ is harmonic, and, 
furthermore since $\vec{u}_n\cdot n=\vec \chi_n \cdot n=0$ on $\Gamma$, we automatically have
\be\label{12bis}
\nabla s_n \cdot n=0 \hbox{ on } \Gamma.
\ee
Next, by \rfb{znborne} and \rfb{11}, one has
\be\label{14}
\|s_n\|=O(1).
\ee
while \rfb{7} and \rfb{11} lead to
\be\label{14bis}
\|\nabla s_n\|=O(\beta_n).
\ee

Now by \rfb{2}, \rfb{12}  and \rfb{12bis} we see that $s_n$ satisfies
\be\label{15}
i\beta_n s_n-\frac{1}{i\beta_n}\div \nabla s_n=\tilde g_n
\ee
where
$$
\tilde g_n:=\beta_n^{-l} g_n-i\beta_n \varphi_n.
$$
Due to
\rfb{2} and \rfb{11} we see that
\be\label{16}
\|\tilde g_n\|=o(\beta_n^{1-l/2}).
\ee

At this stage we use the multiplier method to find some properties on $s_n$.
First we multiply \rfb{15} by $-i\bar s_n$ and integrate in $\Omega$
to obtain due to \rfb{14} and \rfb{16}
$$
\int_\Omega (\beta_n s_n+\frac{1}{\beta_n}\div \nabla s_n) \bar s_n\,dx=o(\beta_n^{1-l/2}).
$$
By an integration by part  and since   the boundary term is zero due to \rfb{12bis}, we obtain
\be\label{17}
\int_\Omega (\beta_n |s_n|^2-\frac{1}{\beta_n}|\nabla s_n|^2)\,dx=o(\beta_n^{1-l/2}).
\ee

Secondly we take a (smooth) real-valued multiplier $m\in C^2(\Omega)^d$ fixed later on but such that
$m=0$ on $\Gamma$
and multiply
\rfb{15} by $-im\cdot \nabla\bar s_n$ and integrate in $\Omega$
to obtain due to \rfb{14bis} and \rfb{16}
\be\label{18}
\int_\Omega (\beta_n s_n+\frac{1}{\beta_n}\div \nabla s_n) m\cdot \nabla\bar s_n\,dx=o(\beta_n^{2-l/2}).
\ee

Now,
in a standard way, by means of Green's formula, the first term
$$
I_1:=\int_\Omega   s_n m\cdot \nabla\bar s_n\,dx
$$
is transformed into
\beqs
I_1&=&-\int_\Omega   \partial_k (s_n m_k) \bar s_n\,dx+\int_\Gamma  m\cdot n |s_n|^2d\sigma\\
&=&-\bar I_1-\int_\Omega   \div m |s_n|^2\,dx,
\eeqs
recalling that $m=0$ on the boundary. Hence we have
\be\label{19}
2 \Re I_1=-\int_\Omega   \div m |s_n|^2\,dx.
\ee
In the same manner we have
\begin{eqnarray}\label{19bis}
I_2:=\int_\Omega\div \nabla s_n m\cdot \nabla\bar s_n\,dx&=&
-\int_\Omega  \partial_k s_n \partial_k(m\cdot \nabla\bar s_n)\,dx
\\
\nonumber
&=&
-\int_\Omega  \partial_k s_n \partial_k m_j \partial_j\bar s_n\,dx
-I_3,
\end{eqnarray}
where
$$
I_3=\int_\Omega  \partial_k s_n   m_j \partial_k \partial_j\bar s_n\,dx.
$$
Again an integration by part leads to

\beqs
I_3&=&-\int_\Omega  \partial_j(\partial_k s_n   m_j) \partial_k \bar s_n\,dx
\\
&=&-\bar I_3-\int_\Omega   \div m |\nabla s_n|^2\,dx.
\eeqs
Consequently
$$
2\Re I_3=-\int_\Omega   \div m |\nabla s_n|^2\,dx,
$$
and using this expression in \rfb{19bis} we find

\begin{eqnarray}\label{20}
2\Re I_2&=&
-2\Re \int_\Omega  \partial_k s_n \partial_k m_j \partial_j\bar s_n)\,dx
\\
&+&\int_\Omega   \div m |\nabla s_n|^2\,dx.
\nonumber
\end{eqnarray}
Taking the real part of \rfb{18} and using \rfb{19} and \rfb{20} we obtain
\begin{eqnarray}\label{21}
&&-\beta_n\int_\Omega   \div m |s_n|^2\,dx
\nonumber
+\frac{1}{\beta_n}\int_\Omega   \div m |\nabla s_n|^2\,dx
\\
&&-\frac{2}{\beta_n} \Re \int_\Omega  \partial_k s_n \partial_k m_j \partial_j\bar s_n\,dx
=o(\beta_n^{2-l/2}).
\end{eqnarray}

From \rfb{7ter} and \rfb{11}, we see that
\be\label{7quatro}
\int_\Omega \alpha |\nabla s_n|^2 \,dx=o(\beta_n^{2-l}),
\ee
hence for $l>2$, $\nabla s_n$ tends to zero on $\omega$. This means that we are mainly interested in the behaviour of $s_n$ outside $\omega$.
Therefore we take for $m$ a function with a support far from the boundary, namely we take
$$
m(x)=\eta(x) x,
$$
where $\eta$ is a smooth cut-off function such that
$$
\eta=1 \hbox{ on } \omega^c\quad \hbox{ and } \eta=0 \hbox{ in } O',
$$
where $O'$ is a neigbourhood of $\Gamma$. Since
$$
\partial_k m_j=\delta_{jk} \eta+x_j \partial_k \eta,
$$
\rfb{21} becomes
\begin{eqnarray}\label{21bis}
&&-\beta_n\int_\Omega   (d\eta + x\cdot \nabla \eta) |s_n|^2\,dx
\nonumber
+\frac{1}{\beta_n}\int_\Omega   ((d-2)\eta + x\cdot \nabla \eta) |\nabla s_n|^2\,dx
\\
&&-\frac{2}{\beta_n} \Re \int_\Omega  \partial_k s_n \partial_k \eta x_j \partial_j\bar s_n\,dx
=o(\beta_n^{2-l/2}).
\end{eqnarray}

Assume that the next estimate
\be\label{24}
\left(\int_\omega |s_n|^2 \,dx\right)^\frac12=o(\beta_n^{1-l/2})
\ee
holds. Then combining this estimate with   \rfb{7quatro} in \rfb{21bis} we obtain
\be\label{25}
-\beta_n d\int_\Omega   \eta  |s_n|^2\,dx
+\frac{1}{\beta_n} (d-2)\int_\Omega   \eta   |\nabla s_n|^2\,dx
=o(\beta_n^{2-l/2}).
\ee

Coming back to \rfb{17} and writting $1=\eta+(1-\eta)$, we get
\beqs
\int_\Omega (\beta_n \eta|s_n|^2-\frac{1}{\beta_n}\eta|\nabla s_n|^2)\,dx
&=&o(\beta_n^{1-l/2})
\\
&-&\int_\Omega (\beta_n (1-\eta)|s_n|^2-\frac{1}{\beta_n}(1-\eta)|\nabla s_n|^2)\,dx.
\eeqs
Hence \rfb{24} and \rfb{7quatro} lead to

\be\label{26}
\int_\Omega (\beta_n \eta|s_n|^2-\frac{1}{\beta_n}\eta|\nabla s_n|^2)\,dx
=o(\beta_n^{1-l/2})+o(\beta_n^{3-l}).
\ee

This estimate in \rfb{25} yields
$$
  \int_\Omega   \eta  |s_n|^2\,dx
=o(\beta_n^{1-l/2})+o(\beta_n^{2-l}).
$$
By choosing $l\geq 2$ and using again \rfb{24}, we arrive at
\be\label{27}
\int_\Omega     |s_n|^2\,dx
=o(\beta_n^{1-l/2}).
\ee

Taking into account \rfb{17} we get
$$
\int_\Omega |\nabla s_n|^2\,dx=
\beta_n^2\int_\Omega  |s_n|^2\,dx+o(\beta_n^{2-l/2}),
$$
and   \rfb{27}
finally leads to
\be\label{28}
\int_\Omega |\nabla s_n|^2\,dx=o(\beta_n^{3-l/2}).
\ee

Coming back to our original variables, for $r_n$ we have by   \rfb{27} and \rfb{11}
that
$$
\|r_n\|^2=o(\beta_n^{1-l/2}),
$$
while for $\vec{u}_n$ using \rfb{12}, \rfb{28} and \rfb{11}
$$
\|\vec{u}_n\|^2=o(\beta_n^{1-l/2}).
$$
In conclusion for $l\geq 3$, $\|r_n\|$ and $\|\vec{u}_n\|$ tend to zero
which contradicts \rfb{znborne}.

It remains to prove \rfb{24}.
For that purpose, we first show that the mean of $s_n$ on $\Gamma$ tends to zero.
Fix a function $h\in (C^2(\bar\Omega))^d$  such that
supp $h\subset \omega$ and
$$
h\cdot n=1 \hbox{ on } \Gamma.
$$
Such a function exists by Lemma I.3.1 of \cite{Li} since we assume that $\Omega$ has a $C^3$-boundary.
We then multiply \rfb{1} by $h$ and integrate in $\Omega$ to get
$$
\int_\Omega ((i\beta_n+\alpha) \vec{u}_n+\nabla r_n)h\,dx=o(\beta_n^{-l}).
$$
Hence integrating by parts, we obtain
\be\label{serge1506}
\int_\Omega ((i\beta_n+\alpha) \vec{u}_n h-r_n \div h)\,dx
+\int_\Gamma r_n \,d\sigma =o(\beta_n^{-l}).
\ee
We still need to transform the term
$$
\int_\Omega r_n \div h\,dx.
$$
By using \rfb{2} we may write
$$\int_\Omega r_n \div h\,dx
=-i\int_\Omega (\beta_n^{-(l+1)} g_n-\div \vec{u}_n)\div h\,dx.
$$
Again by Green's formula and the fact that $\vec{u}_n\cdot n=0$ on $\Gamma$ we obtain
$$\int_\Omega r_n \div h\,dx
=-i\int_\Omega (\beta_n^{-(l+1)} g_n \div h+\vec{u}_n\cdot \nabla \div h) \,dx.
$$
Using this identity in \rfb{serge1506} we obtain
$$
\int_\Gamma r_n \,d\sigma =
-\int_\Omega ((i\beta_n+\alpha) \vec{u}_n h-i\beta_n^{-(l+1)} g_n \div h-i\vec{u}_n\cdot \nabla \div h)\,dx
+o(\beta_n^{-l}).
$$
Using \rfb{3} we get
\be\label{meanr}
\int_\Gamma r_n \,d\sigma =o(\beta_n^{1-l/2}).
\ee
But a standard Poincar\'e type inequality implies that
$$
\left(\int_\omega |r_n|^2 \,dx\right)^\frac12\leq C
(\left(\int_\omega |\nabla r_n|^2 \,dx\right)^\frac12+\left|\int_\Gamma r_n \,d\sigma\right|,
$$
for some $C>0$ depending only on $\omega$.
Hence using \rfb{meanr}
and   \rfb{7ter} yields
\be\label{23}
\left(\int_\omega |r_n|^2 \,dx\right)^\frac12=o(\beta_n^{1-l/2}).
\ee
This estimate and \rfb{11} leads to \rfb{24}.
\end{proof}

This Lemma and Lemmas \ref{pointspectrum} and \ref{resolventiR} show that the restriction of the operator  $\mathcal{A}_d$
to $H_0$ satisfies the hypotheses of Lemma \ref{lemrao}
with $l=3$. Therefore we have obtained the next main result.

\begin{theorem} \label{stabpoly}
Assume that $\omega$ contains a neighborhood  of the whole boundary $\Gamma$ then the system \rfb{fluide}
with an initial datum $(\vec{u}^0,r^0)$ in $H_0\cap D(\mathcal{A}_d)$, 
$H_0 = {\rm Ker}[{\cal A}_d]^\perp$, is polynomially stable, namely
there exists $C>0$ such that
$$
E(t)\leq C t^{-2/3} \|(\vec{u}^0,r^0)\|_{D(\mathcal{A}_d)}^2, \forall t>0.
$$
\end{theorem}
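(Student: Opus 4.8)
The plan is to read the statement off the Borichev--Tomilov criterion recorded in Lemma~\ref{lemrao}, applied not to $\mathcal{A}_d$ on all of $H$ but to its restriction $\mathcal{L} := \mathcal{A}_d|_{H_0}$ to the invariant subspace $H_0 = {\rm Ker}[\mathcal{A}_d]^\perp$. Since the energy identity \rfb{deriveeenergy} shows that $\mathcal{A}_d$ is dissipative, and since Section~\ref{section2} established that both $E = {\rm Ker}[\mathcal{A}_d]$ and $H_0$ are left invariant by the flow, $\mathcal{L}$ generates a $C_0$-semigroup of contractions on the Hilbert space $H_0$, with $(\vec u,r)(t) = e^{t\mathcal{L}}(\vec u^0,r^0)$ for data in $H_0$. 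It therefore remains to check the two hypotheses of Lemma~\ref{lemrao} for $\mathcal{L}$ and then to convert the resulting norm decay into the energy decay asserted in the statement.

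First I would verify the spectral inclusion $i\mathbb{R}\subset\rho(\mathcal{L})$. For $\beta\neq 0$, Lemma~\ref{pointspectrum} rules out $i\beta$ as an eigenvalue and Lemma~\ref{resolventiR} gives $i\beta\in\rho(\mathcal{A}_d)$; since $H_0$ is invariant, $(i\beta-\mathcal{A}_d)^{-1}$ restricts to a bounded operator of $H_0$ into itself, so $i\beta\in\rho(\mathcal{L})$. The origin needs separate attention: on $H$ the value $0$ is the eigenvalue carried by the kernel $E$, but on $H_0$ that kernel has been removed, so $\mathcal{L}$ is injective there. To see that $0$ actually lies in $\rho(\mathcal{L})$, note that for $F=(\vec f,p)\in H_0$ orthogonality to the constant elements $(0,\mathrm{const})$ of $E$ forces $\int_\Omega p\,dx = 0$, whence the stationary system $-\nabla r-\alpha\vec u=\vec f$, $\nabla\cdot\vec u=-p$ with $\vec u\cdot n|_\Gamma=0$ satisfies the compatibility condition and is solvable on $H_0$ with a bounded solution operator by standard elliptic estimates; this yields $0\in\rho(\mathcal{L})$ and completes the inclusion. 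For the quantitative hypothesis \rfb{1.9}, Lemma~\ref{lemresolvent} supplies exactly the bound with $l=3$, and because for large $|\beta|$ the resolvent $(i\beta-\mathcal{L})^{-1}$ is merely the restriction to $H_0$ of $(i\beta-\mathcal{A}_d)^{-1}$, the estimate $\limsup_{|\beta|\to\infty}\beta^{-3}\|(i\beta-\mathcal{L})^{-1}\|<\infty$ transfers verbatim.

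With both hypotheses in hand, Lemma~\ref{lemrao} with $l=3$ gives, for some $C>0$,
\[
\| e^{t\mathcal{L}} U_0 \|_{H} \leq C\, t^{-1/3}\, \| U_0 \|_{\mathcal{D}(\mathcal{A}_d)}, \qquad U_0=(\vec u^0,r^0)\in H_0\cap\mathcal{D}(\mathcal{A}_d),\ t>1.
\]
Recalling from \rfb{energy} that $E(t)=\tfrac12\|(\vec u,r)(t)\|_H^2=\tfrac12\|e^{t\mathcal{L}}U_0\|_H^2$, squaring this bound produces $E(t)\leq \tfrac{C^2}{2}\,t^{-2/3}\|U_0\|_{\mathcal{D}(\mathcal{A}_d)}^2$ for $t>1$; for $0<t\leq 1$ the estimate is absorbed into the constant using the non-increase of $E$ from \rfb{deriveeenergy} together with the embedding $\mathcal{D}(\mathcal{A}_d)\hookrightarrow H$. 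This is precisely the claimed rate. I expect the only genuinely delicate point to be the treatment of the origin $\beta=0$: one must argue that passing to $H_0$ converts the kernel eigenvalue of $\mathcal{A}_d$ into a point of the resolvent set of $\mathcal{L}$, so that the \emph{entire} imaginary axis, and not merely $i\mathbb{R}\setminus\{0\}$, is available for Lemma~\ref{lemrao}. The remaining steps, namely the transfer of the high-frequency resolvent bound to $H_0$ and the squaring that turns the $t^{-1/3}$ semigroup decay into the $t^{-2/3}$ energy decay, are routine.
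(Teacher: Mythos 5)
Your proposal is correct and is essentially the paper's own proof: the paper's entire argument consists of citing Lemmas \ref{pointspectrum}, \ref{resolventiR} and \ref{lemresolvent} to verify the hypotheses of Lemma \ref{lemrao} with $l=3$ for the restriction of $\mathcal{A}_d$ to $H_0$, and then reading off the $t^{-2/3}$ energy rate by squaring the $t^{-1/3}$ semigroup bound, exactly as you do. The only place you go beyond the paper is the point $\beta=0$, which the paper passes over in silence; be aware, though, that your sketch there is itself incomplete: where $\alpha$ vanishes the stationary system degenerates to $\nabla r=\vec f$, so surjectivity requires $\vec f$ to be a gradient field on that region---a fact that follows from orthogonality of $(\vec f,p)$ to the \emph{solenoidal} elements of ${\rm Ker}[\mathcal{A}_d]$ (i.e.\ the full definition of $H_0$), not merely from the compatibility condition $\int_\Omega p\,{\rm d}x=0$ that you invoke---and this degenerate system is not disposed of by ``standard elliptic estimates'' alone.
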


\protect\bibliographystyle{abbrv}
    \protect\bibliographystyle{alpha}
    \bibliography{/Users/sergenicaise/Documents/Serge/Desktop/Biblio/control,/Users/sergenicaise/Documents/Serge/Desktop/Biblio/valein,/Users/sergenicaise/Documents/Serge/Desktop/Biblio/kunert,/Users/sergenicaise/Documents/Serge/Desktop/Biblio/est,/Users/sergenicaise/Documents/Serge/Desktop/Biblio/femaj,/Users/sergenicaise/Documents/Serge/Desktop/Biblio/mgnet,/Users/sergenicaise/Documents/Serge/Desktop/Biblio/dg,/Users/sergenicaise/Documents/Serge/Desktop/Biblio/bib,/Users/sergenicaise/Documents/Serge/Desktop/Biblio/maxwell,/Users/sergenicaise/Documents/Serge/Desktop/Biblio/bibmix,/Users/sergenicaise/Documents/Serge/Desktop/Biblio/cochez,/Users/sergenicaise/Documents/Serge/Desktop/Biblio/soualem,/Users/sergenicaise/Documents/Serge/Desktop/Biblio/nic}

\end{document}